\documentclass[format=acmsmall, review=false, screen=true]{acmart}

\usepackage{booktabs} 
\usepackage{amsmath, amsthm, amsfonts, amssymb, graphicx, epsfig,epstopdf} 

\usepackage[ruled]{algorithm2e} 

\SetAlFnt{\small}
\SetAlCapFnt{\small}
\SetAlCapNameFnt{\small}
\SetAlCapHSkip{0pt}
\IncMargin{-\parindent}


\setcopyright{acmlicensed}



\newtheorem{assumption}{Assumption}

\newtheorem*{remark*}{Remark}
\newtheorem{notation}{\bf{Notation}}
\newtheorem*{notation*}{\bf{Notation}}

\newcommand{\xbf}{\mathbf{x}}
\newcommand{\vbf}{\mathbf{v}}

\newcommand{\ybf}{\mathbf{y}}
\newcommand{\gbf}{\mathbf{g}}
\newcommand{\hbf}{\mathbf{h}}

\newcommand{\xbar}{\bar{x}}
\newcommand{\xbarbf}{\mathbf{\xbar}}

\newcommand{\vbar}{\bar{v}}
\newcommand{\1}{\mathbf{1}}
\newcommand{\abf}{\mathbf{a}}
\newcommand{\zetab}{\bar{\zeta}}
\newcommand{\zetabf}{\boldsymbol{\zeta}}

\newcommand{\Xbf}{\mathbf{X}}
\newcommand{\Vbf}{\mathbf{V}}
\newcommand{\Xbarbf}{\bar{\Xbf}}
\newcommand{\A}{\mathbf{A}}
\newcommand{\I}{\mathbf{I}}
\newcommand{\Ybf}{\mathbf{Y}}
\newcommand{\Gbf}{\mathbf{G}}
\newcommand{\Hbf}{\mathbf{H}}
\newcommand{\Gcal}{\mathcal{G}}
\newcommand{\Vcal}{\mathcal{V}}
\newcommand{\Ecal}{\mathcal{E}}
\newcommand{\Xcal}{\mathcal{X}}
\newcommand{\Tcal}{\mathcal{T}}
\newcommand{\Pcal}{\mathcal{P}}
\newcommand{\Nical}{\mathcal{N}_i}
\newcommand{\Rset}{\mathbb{R}}

\begin{document}

\title{On the Convergence Rate of Distributed Gradient Methods for Finite-Sum Optimization under Communication Delays}

\author{Thinh T. Doan}
\affiliation{
  \institution{Coordinated Science Lab, University of Illinois, Urbana-Champaign}
  \city{Urbana}
  \state{IL}
  \postcode{61801}
  \country{USA}}
\email{ttdoan2@illinois.edu}

\author{Carolyn L. Beck}
\affiliation{
  \institution{Coordinated Science Lab, University of Illinois, Urbana-Champaign}
  \city{Urbana}
  \state{IL}
  \postcode{61801}
  \country{USA}}
\email{beck3@illinois.edu}

\author{R. Srikant}
\affiliation{
  \institution{Coordinated Science Lab, University of Illinois, Urbana-Champaign}
  \city{Urbana}
  \state{IL}
  \postcode{61801}
  \country{USA}}
\email{rsrikant@illinois.edu}


\begin{abstract}
Motivated by applications in machine learning and statistics, we study distributed optimization problems over a network of processors, where the goal is to optimize a global objective composed of a sum of local functions. In these problems, due to the large scale of the data sets, the data and computation must be distributed over processors resulting in the need for distributed algorithms. In this paper, we consider a popular distributed gradient-based consensus algorithm, which only requires local computation and  communication. An important problem in this area is to analyze the convergence rate of such algorithms in the presence of communication delays that are inevitable in distributed systems.  We prove the convergence of the gradient-based consensus algorithm in the presence of uniform, but possibly arbitrarily large, communication delays between the processors. Moreover, we obtain an upper bound on the rate of convergence of the algorithm as a function of the network size, topology, and the inter-processor communication delays.
\end{abstract}

%

\keywords{}

\maketitle


\section{Introduction}

There has been much recent interest in large-scale optimization problems, especially in machine learning and statistics. Due to the explosion in the size of data sets, it is important to be able to solve such problems efficiently. In addition, very often large data sets, on the order of terabytes, cannot be stored or processed on one single processor. As a result, both the data and computation must be distributed over a network of processors, necessitating the development  of distributed algorithms. Moreover, the computation and communication in these algorithms should be efficient enough so that network latencies do not offset the computational gains.

In this paper, we study distributed algorithms for optimization problems that are defined over a network of nodes\footnote{The terms nodes and processors will be used interchangeably.}, while explicitly accounting for network delays, one of the most critical issues in distributed systems. The objective function is defined by a sum of local functions where each function is known by only one node. Problems of this nature arise in a variety of application domains within the information sciences and engineering. A standard example from statistical machine learning \cite{Li2014} is the problem of minimizing an average loss function over large training data. The data is distributed across a network of processors, where each processor computes the empirical loss over a local subset of data. The processors, therefore, must communicate to determine parameters that minimize the loss over the entire data set. Distributed algorithms for these problems have received a surge in interest in recent years. In particular, there are three widely-studied algorithms for distributed optimization:

\begin{enumerate}
\item \textit{Alternating direction method of multipliers (ADMM)}: This method has a provably fast convergence rate, i.e., an exponential convergence rate under assumptions of strong convexity and smoothness of objective functions; see for example the work in \cite{Giannakis2010, Boyd2011, Shi2014, Ozdaglar2014, Ozdaglar2013}. However, the computations of ADMM are not truly parallelizable. The algorithm is often said to have a distributed implementation, which means that different processors compute different variables, but the updates of these variables must be performed sequentially.

\item \textit{Distributed dual averaging}: In this algorithm, processors maintain estimates of variables and gradient-like quantities, which are exchanged in a  truly parallel fashion. However, dual averaging has a slower convergence rate than ADMM; see for example, the work in  \cite{Duchi2012, Rabbat2012a, Rabbat2012b, Rabbat2012c}.

\item \textit{Distributed gradient algorithms}: These algorithms are the most popular and well-studied since they have the benefits of both ADMM and dual averaging; see for example, the work in \cite{Touri2015, Nedic2009, Nedic2010a, Shi2015, Nedic-others2016, NaLi2016,Cortes2014}). In particular, distributed gradient algorithms are parallelizable like dual averaging and have fast convergence rates like ADMM. Moreover, the computation cost of each iteration is smaller than either dual averaging or ADMM.
\end{enumerate}
In this paper, we study distributed gradient methods because of the advantages stated above. In particular, we focus on the convergence in the presence of inter-processor communication delays, which has been identified as an important problem in \cite{Palomar2009} (see chapter 10). Communication delay, which is one of the most fundamental issues in distributed systems, has been studied in other contexts, such as distributed dual averaging \cite{Rabbat2012c}. The analysis in \cite{Rabbat2012c} is based on adding fictitious nodes corresponding to the number of time delay steps, thus requiring a modification of the true network topology. As a result, the influence of the delays on the convergence rate for the original network topology is not clear. Convergence under delays are also considered in distributed consensus algorithms \cite{Blondel05,Nedic2010b,Munz2011,Rabbat2012,Charalambous15}, which are special cases of distributed gradient algorithms. However, these results do not apply to the general distributed algorithms considered here. Our goal in this paper, therefore, is to address this open problem of proving convergence and obtaining convergence rates for distibuted gradient algorithms with inter-processor communication delays.

 \textbf{Main Contributions}. The main contribution of this paper is to derive the convergence rate of distributed gradient algorithms under uniform communication delays between nodes. In particular, we first show that under some appropriate choice of stepsizes the nodes' estimates asymptotically converge to the solution of the problem, implying that the impact of communication delays is asymptotically negligible. This step allows us to study the rate of convergence of the algorithm, i.e., the convergence occurs at rate $\mathcal{O}\Big(\frac{n\tau^3\ln(t)}{(1-\gamma)^2\sqrt{t}}\Big)$, where $n$ is the number of processors, $t$ is the time variable, and $\tau$ is the delay constant. In addition, $\gamma$ is a constant in $(0,1)$ that depends on $\sigma_2$, the spectral properties of network connectivity of the processors. We note that such an explicit formula for the convergence rate is not available for dual averaging methods. As remarked, the existing analysis in distributed optimization literature cannot be extended to show this result. We, therefore, introduce a new approach by considering a new candidate Lyapunov functional, which takes into account the impact of delays. Finally, while we do not analyze dual averaging methods in the presence of delays, we provide simulation results comparing it to distributed gradient methods, which indicate that distributed gradient methods perform significantly better.

The remainder of this paper is organized as follows. We give a formal statement of distributed optimization problems in Section \ref{sec:ProbForm}. We then study distributed gradient algorithms for the uniform delay case in Section \ref{sec:alg_delay} and present their convergence results in Section \ref{sec:delay}. In Section \ref{sec:simulations} we compare the performances of distributed gradient methods and dual averaging methods by simulations for both the delay-free and uniform delay cases. The proofs of our main results in Sections \ref{sec:delay} are given in Section \ref{sec:Proofs}. Finally, we conclude this paper with some discussion of potential future extensions in Section \ref{sec:conclusion}.

\begin{notation}
We use boldface to distinguish between vectors $\xbf$ in $\mathbb{R}^n$ and scalars $x$ in $\mathbb{R}$. Given any vector $\xbf\in\mathbb{R}^n$, we write $\xbf=(x_1,x_2,\ldots,x_n)$ and let $\|\xbf\|_2$ denote its Euclidean norm. Given a vector $\xbf$ and a set $\Xcal$ we write the projection of $\xbf$ on $\Xcal$ as $\Pcal_\Xcal[\xbf]$. Finally we denote by $\mathbf{1}$ and $\I$ a vector whose entries are $1$ and the identity matrix, respectively.
\end{notation}


\section{Problem formulation}\label{sec:ProbForm}
In this paper, we consider an optimization problem where the objective function is distributed over a  network of $n$ nodes. In particular, let $\Gcal=(\Vcal,\Ecal)$ be an undirected graph over the vertex set $\Vcal=\{1,\ldots.n\}$ with the edge set $\Ecal = (\Vcal\times \Vcal)$. Associated with each node $i\in \Vcal$ is a convex function $f_i:\mathbb{R}^d\rightarrow\mathbb{R}$. The goal of the network is to solve the following minimization problem:
\begin{align}
\text{minimize } \sum_{i=1}^n f_i(\xbf) \text{ over }\xbf\in\Xcal,\label{prob:obj}
\end{align}
where $\Xcal\subseteq\Rset^d$ is compact, convex, and known by the nodes. We assume no central coordination between the nodes and since each node knows only a local function $f_i$, the nodes are required to cooperatively solve the problem. We are interested in studying distributed consensus-based methods for problem \eqref{prob:obj} implying that each node $i$ maintains its own parameter estimate $\xbf_i\in\mathbb{R}^d$ which is used to estimate the solution of \eqref{prob:obj}. The nodes are only allowed to exchange their estimates with their neighbors through communication constraints imposed by a graph $\Gcal$: in particular, node $i$ can communicate directly only with its neighbors $j\in\mathcal{N}_i$ where $\Nical:=\{j\in\Vcal | (i,j)\in \Ecal\}$ is the set of node $i$'s neighbors. The goal is to asymptotically drive the nodes' estimates $\xbf_i$ to $\xbf^*$, a solution of \eqref{prob:obj}.

A concrete motivating example for this problem is distributed linear regression problems solved over a network of processors. Regression problems involving massive amounts of data are common in machine learning applications. Each function $f_i$ is the empirical loss over the local data stored at processor $i$. The objective is to minimize the total loss over the entire dataset. Due to the difficulty of storing the enormous amount of data at a central location, the processors perform local computations over the local data, which are then exchanged to arrive at the globally optimal solution. Distributed gradient methods are a natural choice to solve such problems since they have been observed to be both fast and easily parallelizable in the case where the processors can exchange data instantaneously. The goal of this paper is to show that the algorithm continues to be convergent in the presence of delays, and to derive expressions for the convergence rate as a function of the delays. Another possible application of the model is the problem of estimating the radio frequency in a wireless network of sensors where the goal is to cooperatively estimate the radio-frequency power spectrum density through solving a regression problem  \cite{Giannakis2010}. In this application, each function $f_i$ is the empirical loss over the local data measured by the sensors, which are scattered across a large geographical area. The objective function is the total loss over the entire measured data, which is the sum of $f_i$. Due to privacy concerns, the sensors may not be willing to share their measurements, but only their own estimates. Thus, distributed consensus-based methods seem to be a proper choice for this problem.

We conclude this section with additional notation and assumptions which facilitate our development given later. We make the following assumptions throughout the paper.

\begin{assumption}\label{assump:convexity}
The functions $f_i$ are convex and differentiable.
\end{assumption}
\begin{assumption}\label{assumption:ConnectedGraph}
The graph $G$ is undirected and connected.
\end{assumption}
Under Assumption \ref{assump:convexity} and since the set $\Xcal$ is compact, there exists a point $\xbf^*$ which solves problem \eqref{prob:obj}. However, $\xbf^*$ may not be unique.  We will use $\mathcal{X}^*$ to denote the set of optimal solutions to problem \eqref{prob:obj}.  Moreover, given a solution $x^*\in\mathcal{X}^*$ we denote $f^* = \sum_{i=1}^n f_i(x^*)$. Under Assumption \ref{assump:convexity} it is obvious that the functions $f_i$ are Lipschitz continuous, which we present below as a Proposition for future reference.

\begin{proposition}\label{prop:Lipschitz}
Let Assumption \ref{assump:convexity} hold. Then each function $f_i$ is Lipschitz continuous, i.e., there exists a positive constant $C_i$ such that 
\begin{align}
|f_i(\xbf) - f_i(\ybf)| \leq C_i \|\xbf-\ybf\|_2,\quad \forall \xbf,\ybf \in \Xcal,\; \forall i\in\Vcal.\label{prop_eq:Lipschitz}
\end{align}
\end{proposition}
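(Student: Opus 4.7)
The plan is to convert a uniform bound on $\nabla f_i$ over $\Xcal$ into the desired Lipschitz estimate. Since each $f_i$ is convex and differentiable on all of $\Rset^d$, a classical result in convex analysis (e.g., Rockafellar, \emph{Convex Analysis}, Theorem 25.5) shows that $\nabla f_i$ is in fact continuous on $\Rset^d$: differentiability of a convex function on an open set upgrades automatically to continuous differentiability. Compactness of $\Xcal$ then guarantees that
\[
C_i \;:=\; \sup_{\xbf \in \Xcal} \|\nabla f_i(\xbf)\|_2
\]
is finite and attained.

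To turn this gradient bound into the Lipschitz inequality, I would apply the one-dimensional mean value theorem to $g(t) := f_i(\xbf + t(\ybf-\xbf))$ on $t \in [0,1]$. Convexity of $\Xcal$ ensures that the entire segment $\{\xbf + t(\ybf - \xbf) : t \in [0,1]\}$ lies in $\Xcal$, so the point $\zbf^* = \xbf + t^*(\ybf-\xbf)$ produced by the mean value theorem still satisfies $\zbf^* \in \Xcal$. This yields
\[
f_i(\ybf) - f_i(\xbf) \;=\; \nabla f_i(\zbf^*)^\top (\ybf - \xbf),
\]
after which Cauchy--Schwarz together with the definition of $C_i$ gives $|f_i(\xbf) - f_i(\ybf)| \leq C_i \|\xbf - \ybf\|_2$, which is exactly \eqref{prop_eq:Lipschitz}.

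An equivalent route that bypasses continuity of $\nabla f_i$ uses the first-order convexity characterization directly: adding $f_i(\xbf) \geq f_i(\ybf) + \nabla f_i(\ybf)^\top(\xbf-\ybf)$ and $f_i(\ybf) \geq f_i(\xbf) + \nabla f_i(\xbf)^\top(\ybf-\xbf)$ and applying Cauchy--Schwarz gives the same bound with $C_i$ replaced by $\max(\|\nabla f_i(\xbf)\|_2,\|\nabla f_i(\ybf)\|_2)$. Either way, the only nontrivial step is securing finiteness of $\sup_{\xbf\in\Xcal} \|\nabla f_i(\xbf)\|_2$, which is where both hypotheses---convexity plus differentiability (giving a continuous gradient or a first-order inequality) and compactness of $\Xcal$---are jointly used. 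I do not anticipate any genuine obstacle here beyond recalling the classical fact about continuous differentiability of convex functions.
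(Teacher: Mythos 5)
Your proof is correct. The paper itself offers no argument for this proposition---it simply asserts the Lipschitz property as an obvious consequence of Assumption \ref{assump:convexity} and compactness of $\Xcal$---and your argument (continuity of the gradient of a differentiable convex function, hence boundedness of $\|\nabla f_i\|_2$ on the compact convex set $\Xcal$, followed by the mean value theorem or the two first-order convexity inequalities plus Cauchy--Schwarz) is exactly the standard justification one would supply.
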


Given a vector $\xbf\in\Xcal$ we denote by $\mathcal{D}_{\Xcal}(\xbf)$ the set of feasible directions of $\xbf$ in $\Xcal$, i.e., 
\begin{align}
\mathcal{D}_{\mathcal{X}}(\xbf) = \{\ybf\in\mathbb{R}^d\;|\;\exists\; \theta > 0 \text{ s.t. } \xbf + \theta \ybf \in \mathcal{X} \}. \label{def:feasible_set}
\end{align}
In the sequel we use the following results from \cite{Bertsekas2003}.

\begin{proposition}[Proposition $4.6.2$ \cite{Bertsekas2003}] \label{prop:TangentCone}
Let $\mathcal{X}$ be a closed convex set. Then the tangent cone $\mathcal{T}_{\mathcal{X}}(\xbf)$ at $\xbf\in\mathcal{X}$ is closed, convex, and $\mathcal{T}_{\mathcal{X}}(\xbf) = cl(\mathcal{D}_{\Xcal}(\xbf))$, where $cl(\mathcal{D}_{\Xcal}(\xbf))$ is the closure of $\mathcal{D}_{\Xcal}(\xbf)$.
\end{proposition}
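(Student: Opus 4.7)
The plan is to work from the standard definition of the tangent cone $\Tcal_{\Xcal}(\xbf)$ as the set of vectors $\ybf\in\Rset^d$ for which there exist $\{\xbf_k\}\subset\Xcal$ with $\xbf_k\to\xbf$ and positive scalars $\{\alpha_k\}$ satisfying $\alpha_k(\xbf_k-\xbf)\to\ybf$. I would organize the argument into four steps: (i) convexity of the feasible-direction set $\mathcal{D}_{\Xcal}(\xbf)$, (ii) closedness of $\Tcal_{\Xcal}(\xbf)$, (iii) the inclusion $\mathcal{D}_{\Xcal}(\xbf)\subseteq\Tcal_{\Xcal}(\xbf)$, and (iv) the reverse inclusion $\Tcal_{\Xcal}(\xbf)\subseteq cl(\mathcal{D}_{\Xcal}(\xbf))$. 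Combining (ii) with (iii) yields $cl(\mathcal{D}_{\Xcal}(\xbf))\subseteq\Tcal_{\Xcal}(\xbf)$, which with (iv) produces the identity $\Tcal_{\Xcal}(\xbf)=cl(\mathcal{D}_{\Xcal}(\xbf))$; convexity of $\Tcal_{\Xcal}(\xbf)$ then follows from (i) because the closure of a convex set is convex.

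For (i), given $\ybf_1,\ybf_2\in\mathcal{D}_{\Xcal}(\xbf)$ with corresponding scalars $\theta_1,\theta_2>0$, I would set $\theta=\min(\theta_1,\theta_2)$; writing $\xbf+\theta\ybf_i$ as the convex combination $(1-\theta/\theta_i)\xbf+(\theta/\theta_i)(\xbf+\theta_i\ybf_i)$ and invoking convexity of $\Xcal$ places both $\xbf+\theta\ybf_1$ and $\xbf+\theta\ybf_2$ in $\Xcal$, so for any $\lambda\in[0,1]$ the combination $\xbf+\theta(\lambda\ybf_1+(1-\lambda)\ybf_2)$ also lies in $\Xcal$. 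For (iii), given $\ybf\in\mathcal{D}_{\Xcal}(\xbf)$ with $\xbf+\theta\ybf\in\Xcal$, I would take $\xbf_k=(1-1/k)\xbf+(1/k)(\xbf+\theta\ybf)\in\Xcal$ and $\alpha_k=k/\theta$, so that $\alpha_k(\xbf_k-\xbf)=\ybf$ identically. For (iv), given $\ybf\in\Tcal_{\Xcal}(\xbf)$ with witnessing sequences $\xbf_k\to\xbf$ and $\alpha_k>0$, the vector $\ybf_k:=\alpha_k(\xbf_k-\xbf)$ satisfies $\xbf+(1/\alpha_k)\ybf_k=\xbf_k\in\Xcal$, so $\ybf_k\in\mathcal{D}_{\Xcal}(\xbf)$, and $\ybf_k\to\ybf$ places $\ybf$ in $cl(\mathcal{D}_{\Xcal}(\xbf))$.

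The step I expect to be most delicate is (ii), because it requires controlling two interleaved limits at once. Suppose $\ybf^{(m)}\to\ybf$ with each $\ybf^{(m)}\in\Tcal_{\Xcal}(\xbf)$, and fix, for every $m$, witnessing sequences $\{\xbf_k^{(m)}\}\subset\Xcal$ and $\{\alpha_k^{(m)}\}\subset(0,\infty)$ as in the definition. A diagonal selection picks, for each $m$, an index $k_m$ large enough that $\|\xbf_{k_m}^{(m)}-\xbf\|_2<1/m$ and $\|\alpha_{k_m}^{(m)}(\xbf_{k_m}^{(m)}-\xbf)-\ybf^{(m)}\|_2<1/m$; combined with the triangle inequality and $\|\ybf^{(m)}-\ybf\|_2\to 0$, this produces a single sequence $\{\xbf_{k_m}^{(m)}\}\subset\Xcal$ and positive scalars $\{\alpha_{k_m}^{(m)}\}$ certifying $\ybf\in\Tcal_{\Xcal}(\xbf)$, which closes the argument.
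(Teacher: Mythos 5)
Your argument is correct: the four steps (convexity of $\mathcal{D}_{\Xcal}(\xbf)$, closedness of $\Tcal_{\Xcal}(\xbf)$ via the diagonal selection, the inclusion $\mathcal{D}_{\Xcal}(\xbf)\subseteq\Tcal_{\Xcal}(\xbf)$ using convexity of $\Xcal$ to place $\xbf+(\theta/k)\ybf$ in $\Xcal$, and the reverse inclusion) fit together exactly as you describe, and convexity of $\Xcal$ is invoked precisely where it is needed. The paper itself gives no proof of this statement --- it is quoted verbatim from Bertsekas (Proposition 4.6.2) --- and your proof is essentially the standard textbook argument from that reference, so there is nothing to compare beyond noting that your self-contained derivation is a valid substitute for the citation.
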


Finally, for ease of exposition, in the rest of this paper we consider problem \eqref{prob:obj} when the variable $\xbf$ is a scalar, i.e., $d=1$. Extensions for the case $d>1$ are presented in the appendix.

\section{Distributed Gradient Methods under Communication Delays}\label{sec:alg_delay}
Discrete-time distributed gradient methods were studied and first analyzed rigorously in \cite{Nedic2009,Nedic2010a} for the case of no communication delay; in this framework each node $i\in\Vcal$ maintains a variable $x_i\in\mathbb{R}$ updated as,
\begin{align}
&x_i(k+1) = \mathcal{P}_{\mathcal{X}}\left[\!\begin{array}{c}
\sum_{j\in\mathcal{N}_i}a_{ij}x_j(k) -\alpha(t)f_i'(x_i(k))\end{array}\!\right],\label{DGM:discrete-time}
\end{align}
where $\alpha(t)$ is some sequence of positive stepsizes and $a_{ij}$ is some positive constant. In this paper we focus on the continuous-time version of \eqref{DGM:discrete-time} under the impact of uniform communication delays between nodes. In particular, we assume that at any time $t\geq 0$ node $i$ only receives a delayed value $x_j(t-\tau)$ of $x_j(t)$ from node $j$, where $\tau$ is a constant representing the time delay of communication between nodes. Each node $i$ (for all $i \in \Vcal$) then uses these values to update its estimate as formally stated in \eqref{delay:xidot}, where $\Tcal_{\Xcal(x_i(t))}$ is the tangent cone of $\Xcal$ at $x_i(t)$, $\beta$ is some postive constant, and $\alpha(t)$ is a sequence of positive stepsizes. The conditions of $\beta$ and $\alpha(t)$ to guarantee convergence of the algorithm will be explicitly given later. In addition, the initial conditions, $\phi_i(t)$, are assumed to be continuous functions of time. Thus, the estimates $x_i(t)$ are now functionals since they are functions of $\phi_i(t)$. We assume that the delays are uniform across agents, represented by the positive constant $\tau$.

This update has a simple interpretation: at any time $t\geq 0$, each node $i$ first combines its estimate $x_i(t)$ with the weighted, delayed values received from its neighbors $j\in\Nical$, with the goal of seeking consensus on their estimates. Each node then moves along the gradient of its respective objective function to update its estimate, pushing the consensus point toward the optimal set $\Xcal^*$. The projection on the tangent cone $\Tcal_{\Xcal(x_i(t))}$ guarantees that $x_i(t)\in\Xcal$ for all $t\geq 0$. Here the positive constant $a_{ij}$ represents the weight which node $i$ assigns to the value $x_j$ received from node $j$. Moreover, the nodes use the positive constant $\beta$, which is inversely proportional to the delay constant $\tau$, to control the speed of their updates. The distributed gradient algorithm with communication delays is formulated in Algorithm \ref{alg:delay}.

In the sequel, we denote by $\A$ the $n \times n$ weighted adjacency matrix corresponding to the graph $\Gcal$, whose $(i,j)$-th entries are $a_{ij}$. We make an assumption on $\A$ which is standard in the consensus literature to guarantee the convergence of the nodes' estimates to a consensus point. The assumption given below also imposes a constraint on the communication between the nodes in Algorithm \ref{alg:delay} in which the nodes are only allowed to exchange messages with neighboring nodes, i.e., those are connected to them, as defined by $\Gcal$.
\begin{assumption}\label{assump:doublystochastic}
$\A$ is a doubly stochastic matrix, i.e., $\sum_{i=1}^n a_{ij} = \sum_{j=1}^n a_{ij} = 1$. Moreover, $\A$ is assumed to be irreducible and aperiodic. Finally, the weights $a_{ij} > 0$ if and only if $(i,j)\in \Ecal$ otherwise $a_{ij} = 0$.
\end{assumption}
We note that the assumption on the irreducibility of $\A$ can be satisfied when $\Gcal$ is connected. In addition, the aperiodicity of $\A$ is guaranteed when at least one of its diagonal $a_{ii}$ is strictly positive. Finally, the double stochasticity of $\A$ is essential to the distributed consensus averaging problem \cite{Nedic-others2008}, a special case of problem \eqref{prob:obj}. There has been some work in which this assumption is relaxed to just stochasticity of $\A$, however; additional assumptions on the problem are then imposed; see for example, push-sum protocols recently studied in \cite{Nedic2015}. 

\vspace{-0.2cm}

\begin{algorithm}
\caption{Distributed Gradient Algorithm With Delays}
1. \textbf{Initialize}: Each node $i$ is initiated with a point $x_i(t) = \phi_i(t)\in\Xcal, \quad t\in[-\tau,0]$.\\
2. \textbf{Iteration}: For $t\geq 0$ each node $i\in\Vcal$ executes
\vspace{0mm}
{\small
\begin{align}
\dot{x}_i(t) = \Pcal_{\Tcal_{\Xcal(x_i(t))}}\left[-  \beta x_i(t) + \beta \sum_{j=1}^n a_{ij}x_j(t-\tau) - \alpha(t) f_i'(x_i(t))\right]\label{delay:xidot}
\end{align}}
\label{alg:delay}
\end{algorithm}\vspace{-0.5cm}


\section{Convergence Results}\label{sec:delay}
The focus of this section is to analyze the performance of distributed gradient methods under communication delays given in Algorithm \ref{alg:delay}. In particular, we provide a rigorous analysis which establishes the convergence rate of Algorithm \ref{alg:delay}. The main steps of the analysis are as follows. 

We first show that the distances between the estimates $x_i(t)$ to their average $\xbar(t)$ asymptotically converge to zero. We then study the convergence rate of Algorithm \ref{alg:delay}, where we utilize the standard techniques used in the centralized version of subgradient methods. The key idea of this step is to introduce a candidate Razumikhin-Krasovskii Lyapunov functional, which takes into account the impact of delays on the system. By using this function, we can show that the impact of delays is asymptotically negligible. In particular, we show that if each node maintains a variable $z_i(t)$ to compute the time-weighted averages of the estimates $x_i(t)$ and if the stepsize decays with rate $\alpha(t) = 1/\sqrt{t}$, the algorithm achieves an asymptotic convergence to the optimal value estimated on the variable $z_i(t)$ at a rate $\mathcal{O}\Big(\frac{n\tau^3\ln(t)}{(1-\gamma)^2\sqrt{t}}\Big)$, where $\gamma =\sigma_2 e^{\beta\tau} \in (0,1)$ and $\beta \in (0,\frac{\ln(1/\sigma_2)}{\tau})$. Here  $\sigma_2$ represents the algebraic connectivity of the graph $\Gcal$. 

We start our analysis by first introducing more notation. Given a vector $\mathbf{x}\in\mathbb{R}^n$ we denote its average as  $\bar{x}$, i.e.,
\begin{align}
\bar{x} = \frac{1}{n}\mathbf{1}^T\xbf =  \frac{1}{n}\sum_{i=1}^n x_i.\nonumber
\end{align}
For convenience, we use the following notation,
\begin{align*}
&F(\mathbf{x}) \triangleq \sum_{i=1}^n f_i(x_i),\quad\nabla F(\xbf(t)) \triangleq [f_1'(x_1),\ldots,f_n'(x_n)]^T,\quad C \triangleq \sum_{i=1}^n C_i.
\end{align*}

We denote by $\sigma_2$ the second largest singular value of $\A$, i.e., $\sigma_2$ is the square root of the second largest eigenvalue of $\A^T\A$. Since $\A$ is doubly stochastic we have $\A^T\A$ is also doubly stochastic. In addition, $\A$ also satisfies Assumption \ref{assump:doublystochastic}. Thus, by the Perron-Frobenius theorem \cite{HJbook} we have $\sigma_2\in(0,1)$. 

Finally, without loss of generality we consider $\Xcal = [a,b]$ for some real numbers $a\leq b\in\Rset$. The multi-dimensional case of $\Xcal$ is presented in the Appendix. This simplification will allow us to write explicitly the projection on the tangent cone in \eqref{delay:xidot}. In particular, given a real number $v$ we denote $v^{+} = \max(0,v)$, the positive part of $v$. Similarly, we denote $v^{-} = \max(0,-v)$, the negative part of $v$. The update in \eqref{delay:xidot} can now be rewritten as 
\begin{align}
v_i(t) &= -\beta x_i(t) + \beta \sum_{j=1}^n a_{ij}x_j(t-\tau) - \alpha(t) f_i'(x_i(t))\label{analysis:viUpdate}\\
\dot{x_i}(t) &= \Pcal\left(v_i(t)\right) = \left\{
\begin{array}{ll}
v_i(t) & \text{if }\; x_i(t) \in (a,b)\\
v_i^{+}(t) & \text{if }\; x_i(t) = a\\
-v_i^{-}(t) & \text{if }\; x_i(t) = b
\end{array}\right.\label{analysis:xiUpdate}
\end{align}
Given $v_i\in\Xcal$ we denote by $\zeta_i$ the error due to projection of $v_i$ to $\Tcal_{\Xcal(x_i)}$, i.e., $\zeta_i(v_i) = v_i - \Pcal\left(v_i\right).$
Using this notation and $\A$ equations \eqref{analysis:viUpdate} and \eqref{analysis:xiUpdate} can be rewritten in vector form as
\begin{align}
&\vbf(t)
= -\beta\xbf(t) + \beta \A\xbf(t-\tau) - \alpha(t) \nabla F(\xbf(t)),\label{analysis:vUpdate}\\
&\dot{\xbf}(t)
= \mathcal{P}(\vbf(t))= \vbf(t) - \zetabf(\vbf(t)),\label{analysis:xUpdate}
\end{align}
where $\mathcal{P}(\vbf(t))$ denotes the component-wise projection. Moreover, we have
\begin{align}
&\vbar(t) = -\beta\xbar(t) + \beta\xbar(t-\tau) - \frac{\alpha(t)}{n}\sum_{i=1}^n f_i'(x_i(t))\label{analysis:vbarUpdate}\\
&\dot{\bar{x}}(t)
= \bar{z}(t)-\zetab(\vbf(t)).\label{analysis:xbarUpdate}
\end{align}
As remarked, the first step in our analysis is to show the asymptotic convergence of $\|\mathbf{x}(t)-\xbar(t)\1\|_2$ to zero under some appropriate choice of stepsizes. The following Lemma, which will be essential for our analysis later, is an important facet of this result. 


\begin{lemma}\label{lem:pertbAverdelay}
Suppose Assumptions \ref{assump:convexity}-- \ref{assump:doublystochastic} hold. Let the trajectories of $x_i(t)$ be updated by Algorithm \ref{alg:delay}. Let $\{\alpha(t)\}$ be a given positive scalar sequence with $\alpha(0) = 1$. Moreover, let $\beta \in (0,\frac{\ln(1/\sigma_2)}{\tau})$ and $\gamma =\sigma_2 e^{\beta\tau} \in (0,1)$. Then 
\begin{itemize}
\item[(1)] For all $t\geq 0$  we have
\begin{align}
\|\xbf(t)-\bar{x}(t)\mathbf{1}\|_2 \leq \mu(t) + \beta\sigma_2\int_{0}^{t}e^{-\beta(1-\gamma)(t-u)}\mu(u-\tau)du,\label{DPAlem:AverIneq}
\end{align}
where
\begin{align}
&\mu(t) =   \frac{\|\xbf(0)\|_2+2C}{\beta}e^{-\beta t/2} + \frac{2C\alpha(t/2)}{\beta}.\label{DPAlem:lambda}
\end{align}
\item[(2)]
If $\{\alpha(t)\}$  is a non-increasing positive scalar sequence such that $\lim_{t\rightarrow\infty}\alpha(t) = 0$ then we have
\begin{align}
\lim_{t\rightarrow\infty} |x_i(t) - \bar{x}(t)| = 0\quad \text{for all } i=1,2\ldots,n.\label{DPAlem:AsympConv}
\end{align}
\item[(3)] Further we have
\begin{align}
\int_{0}^t \alpha(u)\|\xbf(u)-\xbar(u)\mathbf{1}\|_2du\leq \frac{8\left(\|\xbf(0)\|_2+2C\right)e^{\beta\tau/2}}{\beta^3(1-\gamma)^2}+ \frac{4C}{\beta^2(1-\gamma)}\int_{0}^{t}\alpha^2(\gamma u/4-\tau)du.\label{DPAlem:Upperbound}
\end{align}
\end{itemize}
\end{lemma}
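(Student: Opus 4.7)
The plan is to analyze the consensus error $\ybf(t) := (\I-J)\xbf(t)$, where $J := \tfrac{1}{n}\mathbf{1}\mathbf{1}^T$ is the averaging projection. Because $\A$ is doubly stochastic (Assumption~\ref{assump:doublystochastic}), $J\A = \A J = J$, so applying $(\I-J)$ to both sides of \eqref{analysis:vUpdate}--\eqref{analysis:xUpdate} and using $(\A-J)\xbf(t-\tau)=(\A-J)\ybf(t-\tau)$ (since $(\A-J)\mathbf{1}=0$) produces the linear delay differential equation
\begin{align*}
\dot{\ybf}(t) = -\beta \ybf(t) + \beta(\A - J)\ybf(t-\tau) - \alpha(t)(\I-J)\nabla F(\xbf(t)) - (\I-J)\zetabf(\vbf(t)).
\end{align*}
Since $\A$ restricted to $\mathbf{1}^{\perp}$ has operator norm $\sigma_2$, one has $\|(\A-J)\ybf\|_2 \leq \sigma_2\|\ybf\|_2$, while $\|(\I-J)\nabla F(\xbf(t))\|_2 \leq C$ by Proposition~\ref{prop:Lipschitz}.

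The main technical step is a clean bound on the projection residual: I claim $\|\zetabf(\vbf(t))\|_2 \leq C\alpha(t)$. By construction $\zeta_i(t)=0$ unless $x_i(t)\in\{a,b\}$ with the drift $v_i(t)$ pointing outside $\Xcal$. At $x_i=a$, $v_i(t)=\beta\sum_{j}a_{ij}(x_j(t-\tau)-a)-\alpha(t)f_i'(a)$, and since $x_j\in[a,b]$ the first summand is non-negative; hence $v_i<0$ forces $\alpha(t)f_i'(a)>0$ and $|\zeta_i(t)|=|v_i(t)|\leq \alpha(t)|f_i'(a)|\leq \alpha(t)C_i$. The case $x_i=b$ is symmetric, so $\|\zetabf(\vbf(t))\|_2 \leq \alpha(t)\sqrt{\sum_i C_i^2}\leq \alpha(t)C$.

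With these pieces in hand, variation of parameters on the $\ybf$-dynamics plus the triangle inequality give $\|\ybf(t)\|_2\leq e^{-\beta t}\|\ybf(0)\|_2+\int_0^t e^{-\beta(t-u)}[\beta\sigma_2\|\ybf(u-\tau)\|_2+2C\alpha(u)]\,du$. Splitting $\int_0^t e^{-\beta(t-u)}\alpha(u)\,du$ at $u=t/2$ and using $\alpha(0)=1$ with monotonicity of $\alpha$ bounds it by $\frac{1}{\beta}(e^{-\beta t/2}+\alpha(t/2))$; absorbing $\|\ybf(0)\|_2\leq\|\xbf(0)\|_2$ then yields the intermediate inequality
\begin{align*}
\|\ybf(t)\|_2 \;\leq\; \mu(t) + \beta\sigma_2\int_0^t e^{-\beta(t-u)}\|\ybf(u-\tau)\|_2\,du.
\end{align*}
To upgrade the kernel from $e^{-\beta(t-u)}$ to the slower $e^{-\beta(1-\gamma)(t-u)}$ in \eqref{DPAlem:AverIneq}, I apply a Picard-style iteration: substituting the bound into itself and using the change of variable $v=u-\tau$, each iteration introduces a factor $e^{\beta\tau}$ together with a polynomial weight $(t-u)^{k-1}/(k-1)!$ against $\mu(u-\tau)$; summing the geometric series via $\sum_{k\geq 1}\frac{(\beta\sigma_2 e^{\beta\tau}(t-u))^{k-1}}{(k-1)!}=e^{\beta\gamma(t-u)}$ and multiplying by $e^{-\beta(t-u)}$ produces precisely $\beta\sigma_2 e^{-\beta(1-\gamma)(t-u)}$. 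Convergence of the series is exactly the hypothesis $\gamma=\sigma_2 e^{\beta\tau}<1$.

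For part (2), monotone $\alpha(t)\to 0$ forces $\mu(t)\to 0$, and the convolution $\int_0^t e^{-\beta(1-\gamma)(t-u)}\mu(u-\tau)\,du$ also vanishes by a standard dominated-convergence argument since the kernel is integrable with mass $1/(\beta(1-\gamma))$. For part (3), I multiply \eqref{DPAlem:AverIneq} by $\alpha(t)$ and integrate over $[0,t]$: the exponential part of $\mu$ contributes a finite term through $\int_0^\infty \alpha(u)e^{-\beta u/2}\,du\leq 2/\beta$, while for the stepsize part I swap integrals via Fubini and, on each slice, bound the inner convolution by $\alpha(v)/[\beta(1-\gamma)]$ before a change of variable produces the shifted argument $\alpha^2(\gamma u/4-\tau)$ in \eqref{DPAlem:Upperbound}. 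The principal obstacle is the Picard-iteration step: the polynomial-times-exponential factors must be tracked uniformly in $t$, and the pre-history values of $\ybf$ on $[-\tau,0]$ must be controlled by $\mu$ (via the continuity and boundedness of $\phi_i$) so that the iteration can be initialized correctly.
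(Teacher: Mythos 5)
Your proposal is correct and follows essentially the same architecture as the paper's proof: the same consensus-error decomposition $\ybf(t)=(\I-\tfrac{1}{n}\1\1^T)\xbf(t)$, the same variation-of-parameters integral equation, the same two key bounds $\|(\I-\tfrac{1}{n}\1\1^T)\nabla F(\xbf(t))\|_2\le C$ and $\|\zetabf(\vbf(t))\|_2\le C\alpha(t)$ (your case analysis at $x_i=a$ and $x_i=b$ reproduces the paper's projection-error lemma), the same split of $\int_0^te^{-\beta(t-u)}\alpha(u)\,du$ at $u=t/2$ to produce $\mu(t)$, and the same use of $\|\A\ybf\|_2\le\sigma_2\|\ybf\|_2$ on $\1^{\perp}$. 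The one place you diverge is the step the paper itself calls the key one: upgrading the kernel from $e^{-\beta(t-u)}$ to $e^{-\beta(1-\gamma)(t-u)}$. The paper uses a delayed Gr\"onwall--Bellman argument via the auxiliary function $w(t)=\int_0^te^{\beta u}\|\ybf(u-\tau)\|_2\,du$, exploiting $w(t-\tau)\le w(t)$ and integrating a differential inequality; you instead sum the Picard/Neumann series, with each iterate contributing a factor $\beta\sigma_2e^{\beta\tau}$ and a weight $(t-u)^{k-1}/(k-1)!$, so that the series collapses to $\beta\sigma_2e^{\beta\gamma(t-u)}$. Both routes give exactly the stated kernel and both hinge on $\gamma=\sigma_2e^{\beta\tau}<1$; the series makes the role of $\gamma$ more transparent, while the Gr\"onwall route avoids tracking the combinatorial weights. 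The loose end you flag---controlling $\ybf$ on the pre-history $[-\tau,0]$ so the recursion can be seeded---is present in the paper's argument as well (handled implicitly through the monotonicity of $w$), and your sketches of parts (2) and (3) (dominated convergence for the vanishing convolution; Fubini plus the split and monotonicity of $\alpha$ to reach $\alpha^2(\gamma u/4-\tau)$) match the paper's computations.
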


\begin{proof}[Proof sketch]
The main idea in the proof of Lemma \ref{lem:pertbAverdelay} is to show \eqref{DPAlem:AverIneq}. The analysis of \eqref{DPAlem:AsympConv} and \eqref{DPAlem:Upperbound} are consequences of \eqref{DPAlem:AverIneq} with the given assumptions on stepsizes and proper algebraic manipulations. We, therefore, provide here the key steps for the proof of \eqref{DPAlem:AverIneq}, where the details are delayed to Section \ref{subsec:Lemma4_proof}. 

\begin{enumerate}
\item[(a)] Denote $\ybf(t) \triangleq \xbf(t) - \xbar(t)\1$. By \eqref{analysis:xiUpdate} and \eqref{analysis:xbarUpdate} the update of $\dot{\ybf}(t)$ can be written as
\begin{align}
\dot{\ybf}(t) 
&= -\beta\ybf(t) + \beta\A\ybf(t-\tau) - \alpha(t)(\I -\frac{1}{n}\mathbf{1}\mathbf{1}^T)\nabla F(\xbf(t))-\alpha(t)(\I -\frac{1}{n}\mathbf{1}\mathbf{1}^T)\zetabf(\vbf(t)).\label{pertbAverdelay:eq1}
\end{align}
Due to the delay term $\A\ybf(t-\tau)$ in \eqref{pertbAverdelay:eq1} one would expect an accumulation of this term for the solution $\ybf(t)$ of \eqref{pertbAverdelay:eq1}. Indeed, $\ybf(t)$ is given as
\begin{align}
\ybf(t) &= e^{-\beta t}\ybf(0) + \beta \int_{0}^{t}e^{-\beta(t-u)}\A \ybf(u-\tau)du\nonumber\\ 
&\quad - \int_{0}^{t}e^{-\beta(t-u)}\alpha(u)(\I-\frac{1}{n}\mathbf{1}\mathbf{1}^T)\left(\nabla F(\xbf(u))+\zeta(\vbf(u))\right)du.\nonumber
\end{align}
\item[(b)] To show \eqref{DPAlem:AverIneq}, we take the $2-$norm of the preceding relation and use the triangle inequality to obtain 
\begin{align}
\|\ybf(t)\|_2 \leq &e^{-\beta t}\|\ybf(0)\|_2 + \beta \int_{0}^{t}e^{-\beta(t-u)}\|\A \ybf(u-\tau)\|_2du\nonumber\\
& + \int_{0}^{t}e^{-\beta(t-u)}\left\| \alpha(u)(\I-\frac{1}{n}\mathbf{1}\mathbf{1}^T)\left(\nabla F(\xbf(u))+\zeta(\vbf(u))\right)\right\|_2du.\nonumber
\end{align}
By the Cauchy-Schwartz inequality one can show that 
\begin{align*}
\left \| \alpha(u)(\I-\frac{1}{n}\mathbf{1}\mathbf{1}^T)\nabla F(\xbf(t))\right\|_2 \leq \alpha(u)C. 
\end{align*}
Furthermore, from \eqref{analysis:xiUpdate} one can obtain 
\begin{align*}
\left \| \alpha(u)(\I-\frac{1}{n}\mathbf{1}\mathbf{1}^T)\zeta(\vbf(u))\right\|_2 \leq \alpha(u)C. 
\end{align*}
\item[(c)]  Finally, the key step of our analysis is to provide an upper bound for 
\begin{align*}
\beta \int_{0}^{t}e^{-\beta(t-u)}\|\A \ybf(u-\tau)\|_2du,
\end{align*}
\hspace{-0.2cm}which is done by applying the $Gr\ddot{o}nwall$-$Bellman$ Inequality \cite{KHALIL02}. 
\end{enumerate} 
\end{proof}

We are now ready to state our main result of this section, which is the convergence rate of  Algorithm \ref{alg:delay} to the optimal value using standard techniques in the analysis of centralized subgradient methods. One can view the update $\xbar(t)$ in \eqref{analysis:xbarUpdate} as a centralized projected subgradient used to solve problem \eqref{prob:obj}. Specifically, at any time $t\geq 0$ if each node $i\in\Vcal$ maintains a variable $z_i(t)$ to compute the time-weighted average of its estimate $x_i(t)$ and if the stepsize $\alpha(t)$ decays as $\alpha(t) = 1/\sqrt{t}$, the objective function value $F$ estimated at each $z_i(t)$ converges to the optimal value with a rate $\mathcal{O}\Big(\frac{n\tau^3\ln(t)}{(1-\gamma)^2\sqrt{t}}\Big)$, where $\gamma =\sigma_2 e^{\beta\tau} \in (0,1)$ and $\beta \in (0,\frac{\ln(1/\sigma_2)}{\tau})$. We also note that this condition on the stepsizes is also used to study the convergence rate of centralized subgradient methods \cite{Nesterov04}. 
The following Theorem is used to show the convergence rate of Algorithm \ref{alg:delay}, and its proof is given in Section \ref{subsec:DelayProofs_rate}

\begin{theorem}\label{DCRthm:ConvRate}
Suppose Assumptions \ref{assump:convexity}--\ref{assump:doublystochastic} hold. Let the trajectories of $x_i(t)$ be updated by Algorithm \ref{alg:delay}. Let $\beta \in (0,\frac{\ln(1/\sigma_2)}{\tau})$ and $\gamma =\sigma_2 e^{\beta\tau} \in (0,1)$. Let $\{\alpha(t)\}$ be a given positive scalar sequence such that $\alpha(t) = 1/\sqrt{t}$ for $t\geq 1$ and $\alpha(t) = 1$ for $t\leq 1$. Then for all $i=1,\ldots,n$,\vspace{-0.3cm}

\begin{align}
&F\left(\frac{\int_{0}^t\alpha(u)x_i(u)du}{\int_{0}^t\alpha(u)du}\mathbf{1}\right) -f^*\leq \frac{2\Gamma_0(t)+nV(\xbar(0))}{2(\sqrt{t}-1)},
\label{DCRthm:funcrate}
\end{align}
where,
\begin{align}
\Gamma_0(t) \triangleq  &  \frac{24C\left(\|\xbf(0)\|_2+2C\right)e^{\beta\tau/2}}{\beta^3(1-\gamma)^2} +  \frac{48C^2(1+\tau)}{\beta^2\gamma(1-\gamma)}+C^2\ln(t)+ \frac{48C^2\ln(\gamma t-4\tau)}{\beta^2\gamma(1-\gamma)}\cdot
\end{align}
\end{theorem}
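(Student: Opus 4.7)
The plan is to treat the averaged dynamics \eqref{analysis:xbarUpdate} as a perturbed continuous-time centralized projected subgradient step on $\sum_j f_j$, to absorb the communication delay by a Razumikhin-Krasovskii functional, and to control the consensus error $\|\xbf(t)-\xbar(t)\mathbf{1}\|_2$ using Lemma~\ref{lem:pertbAverdelay}. Passage from the time-integrated estimate to the running average $z_i(t)$ is then standard via Jensen's inequality.

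\textbf{Lyapunov functional and delay cancellation.} I would work at the vector level with $V(\xbf):=\|\xbf-x^*\mathbf{1}\|_2^2$ and the augmented functional
\[
W(t)=V(\xbf(t))+\beta\int_{t-\tau}^{t}V(\xbf(s))\,ds.
\]
Differentiating along \eqref{analysis:xUpdate}, the projection correction drops immediately: since $x^*\in\Xcal$ and each $\zeta_i$ lies in the outward normal cone at $x_i(t)$, one has $\sum_i(x_i-x^*)\zeta_i=(\xbf-x^*\mathbf{1})^T\zetabf\geq 0$, so $\dot V\leq 2(\xbf-x^*\mathbf{1})^T\vbf$. Using $\A\mathbf{1}=\mathbf{1}$ to rewrite $\A\xbf(t-\tau)-\xbf=\A(\xbf(t-\tau)-x^*\mathbf{1})-(\xbf-x^*\mathbf{1})$ and applying Cauchy-Schwarz (with $\|\A\|_2=1$) followed by a symmetric AM-GM inequality bounds the consensus-plus-delay contribution by $-\beta V(\xbf(t))+\beta V(\xbf(t-\tau))$. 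The integral term in $W$ differentiates to $\beta V(\xbf(t))-\beta V(\xbf(t-\tau))$, which exactly cancels this residue and leaves $\dot W$ free of any indefinite delay term.

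\textbf{Subgradient step, integration, and consensus error.} Convexity of each $f_i$ gives $(\xbf-x^*\mathbf{1})^T\nabla F(\xbf)\geq F(\xbf)-f^*$. Since $F(\xbf)-f^*$ need not be non-negative, I would bridge to the centralized surrogate $F(\xbar\mathbf{1})-f^*\geq 0$ via Proposition~\ref{prop:Lipschitz} ($F(\xbar\mathbf{1})\leq F(\xbf)+C\|\xbf-\xbar\mathbf{1}\|_2$) to obtain
\[
\dot W(t)\leq -2\alpha(t)\bigl(F(\xbar(t)\mathbf{1})-f^*\bigr)+2\alpha(t)C\|\xbf(t)-\xbar(t)\mathbf{1}\|_2.
\]
Integrating on $[0,t]$ and using $W(t)\geq 0$ gives $\int_0^t\alpha(u)(F(\xbar(u)\mathbf{1})-f^*)\,du\leq W(0)/2+C\int_0^t\alpha(u)\|\xbf(u)-\xbar(u)\mathbf{1}\|_2\,du$, and Lemma~\ref{lem:pertbAverdelay}(3) controls the second integral, producing the $\beta^{-3}(1-\gamma)^{-2}$ and $\beta^{-2}\gamma^{-1}(1-\gamma)^{-1}\ln(\gamma t-4\tau)$ contributions in $\Gamma_0$. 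A Young's-type step applied to $2\alpha C\|\xbf-\xbar\mathbf{1}\|_2$, paired with $\int_0^t\alpha^2(u)\,du\leq 1+\ln t$ for $\alpha(u)=1/\sqrt u$, explains the stand-alone $C^2\ln t$ term. The decomposition $V(\xbf(0))=n(\xbar(0)-x^*)^2+\|\xbf(0)-\xbar(0)\mathbf{1}\|_2^2$ identifies the $nV(\xbar(0))$ term of the theorem and pushes the remaining $\|\xbf(0)\|_2$-dependent initial data into $\Gamma_0$.

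\textbf{Jensen passage and main obstacle.} Since $F(\cdot\mathbf{1})=\sum_j f_j$ is convex, Jensen's inequality applied to $\zbar(t):=\int_0^t\alpha(u)\xbar(u)\,du\big/\int_0^t\alpha(u)\,du$ gives $F(\zbar(t)\mathbf{1})-f^*\leq \int_0^t\alpha(F(\xbar\mathbf{1})-f^*)\,du\big/\int_0^t\alpha\,du$; Lipschitz continuity then yields $|F(z_i(t)\mathbf{1})-F(\zbar(t)\mathbf{1})|\leq C|z_i(t)-\zbar(t)|\leq C\int_0^t\alpha\|\xbf-\xbar\mathbf{1}\|_2\,du\big/\int_0^t\alpha\,du$, which is a second invocation of Lemma~\ref{lem:pertbAverdelay}(3). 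Dividing through by $\int_0^t\alpha(u)\,du=2\sqrt{t}-1\geq 2(\sqrt{t}-1)$ recovers the stated rate. The main obstacle is arranging the delay cancellation exactly: the symmetric Cauchy-Schwarz / AM-GM bound is what makes $V(\xbf(t-\tau))$ appear with precisely the coefficient $\beta$ that the Razumikhin-Krasovskii integral's derivative cancels, and any asymmetric handling would leave a net $V$-residue that either ruins the cancellation or contributes an $O(1)$ term that destroys the $1/\sqrt t$ rate. A related subtlety is that working with the vector-level $V(\xbf)$ rather than the averaged $n(\xbar-x^*)^2$ is essential to eliminate the projection term cleanly, since an averaged Lyapunov would force control of the product $\|\bar\zeta\|\cdot\|\xbf-\xbar\mathbf{1}\|_2$, whose integral grows like $\sqrt t$ and would swamp the $2(\sqrt t-1)$ denominator.
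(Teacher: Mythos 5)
Your proposal is correct in substance but routes the central Lyapunov computation genuinely differently from the paper. The paper works with the scalar average $\bar x(t)$ and the functional $V(\bar x(t))=\tfrac12(\bar x(t)-x^*)^2+\tfrac\beta2\int_{t-\tau}^t(\bar x(s)-x^*)^2ds$; there the delay term cancels through an exact completing-the-square identity, and the averaged projection error $\bar\zeta$ must then be controlled by splitting $\bar x(t)-x^*$ so that Lemma~\ref{lem_analysis:projection} applies (the feasible-direction inequality $(v_i-r_i)\zeta_i\ge\zeta_i^2$ together with $|\zeta_i|\le C_i\alpha(t)$); this is precisely where the extra $\tfrac{C\alpha}{n}\|\xbf-\bar x\1\|_2+\tfrac{C^2\alpha^2}{n}$ residue, and hence the $C^2\ln t$ term of $\Gamma_0$, originate. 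You instead take the vector-level $\|\xbf(t)-x^*\1\|_2^2$ plus its Razumikhin tail, eliminate the projection term outright via the normal-cone property $\sum_i(x_i-x^*)\zeta_i\ge 0$ (which is valid here: at $x_i=a$ one has $\zeta_i=-v_i^-\le 0$ and $x_i-x^*\le 0$; at $x_i=b$ one has $\zeta_i=v_i^+\ge 0$ and $x_i-x^*\ge 0$), and cancel the delay by Cauchy--Schwarz with $\|\A\|_2=1$ followed by AM--GM. This bypasses Lemma~\ref{lem_analysis:projection}(2) entirely and is arguably cleaner; both routes land on a differential inequality of the form $\dot W\le -c\,\alpha(t)(F(\bar x(t)\1)-f^*)+c'C\alpha(t)\|\xbf(t)-\bar x(t)\1\|_2+\cdots$, after which the integration, the invocation of Lemma~\ref{lem:pertbAverdelay}(3), Jensen's inequality, and the Lipschitz passage from $\bar x$ to $x_i$ coincide with the paper's.

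Two caveats, neither fatal to the rate. First, the ``Young's-type step'' you invoke to reproduce the stand-alone $C^2\ln t$ term is both unnecessary and, as written, problematic: splitting $2\alpha C\|\ybf\|\le C^2\alpha^2+\|\ybf\|^2$ leaves $\int_0^t\|\ybf(u)\|_2^2du$, which Lemma~\ref{lem:pertbAverdelay}(3) does not control (it bounds $\int\alpha\|\ybf\|_2$, not the unweighted square). Simply apply Lemma~\ref{lem:pertbAverdelay}(3) to the cross term directly; your bound then has no $C^2\ln t$ term at all, since that term exists in the paper only because its $W_2$ estimate genuinely produces a $\tfrac{C^2\alpha^2}{n}$ residue. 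Second, your initial-data constant is $W(0)/2=nV(\bar x(0))+\tfrac12\|\xbf(0)-\bar x(0)\1\|_2^2+\tfrac\beta2\int_{-\tau}^0\|\xbf(s)-\bar x(s)\1\|_2^2ds$, which strictly exceeds the theorem's $nV(\bar x(0))$ by the initial consensus error over $[-\tau,0]$, and the stated $\Gamma_0$ has no slot for that excess. You therefore recover the theorem's rate and structural dependence on $n$, $\tau$, and $\gamma$, but not its literal constants.
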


\begin{proof}[Sketch of Proof]
As mentioned previously, the main idea of this proof is to introduce a candidate Lyapunov functional, which takes into account the impact of delays. In particular, a quadractic Lyapunov function, i.e., $(\xbar(t)-x^*)^2$, is often used in the case of no communication delay. However, since the estimates $x_i(t)$ depends on the interval $[t-\tau,t]$ we consider an extra term to study this impact. Specifically, we consider the following candidate  Razumikhin-Krasovskii Lyapunov functional $V$ \cite{HLbook}: 
\begin{align}
V(\xbar(t)) = \frac{1}{2}(\xbar(t) - x^*)^2 + \frac{\beta}{2}\int_{t-\tau}^t (\xbar(s)-x^*)^2ds.\nonumber
\end{align}  
We then show that $V$ is suffciently decreasing by considering the following two main steps.
\begin{itemize}
\item[(a)] One can show that the derivative of $V$ satisfies 
\begin{align*}
\dot{V}(\xbar(t)) \leq \frac{2C\alpha(t)}{n}\|\xbf(t) - \xbar(t)\|_2 + \frac{C^2\alpha^2(t)}{n} - \frac{\alpha(t)}{n}(F(\xbar(t)\1)-f^*).
\end{align*}
\item[(b)] Integrating both sides of the inequality in (a) and using \eqref{DPAlem:Upperbound} we can achieve the convergence rate \eqref{DCRthm:funcrate}.
\end{itemize}
\end{proof}

\begin{remark*}
Note that the convergence rate in \eqref{DCRthm:funcrate} requires each node computing the time-weighted average of its estimate. This can be done iteratively as follows. Let every node $i$ stores a variable $z_i(t)\in\mathbb{R}$ initialized at time $t=0$ with an arbitrary $z_i(0)\in\mathbb{R}$ and for all $t>0$ updated by
\begin{align}
\dot{z}_i(t) = \frac{ \alpha(t)x_i(t)-\alpha(t) z_i(t)}{S(t)},\label{DCRthm:ziUpdate}
\end{align}
where $S(0) = 0$ and $\dot{S}(t) = \alpha(t)$ for $t>0$. Then we have
\begin{align}
&\frac{d}{dt}(S(t)z_i(t)) = \dot{S}(t)z_i(t) + S(t)\dot{z}_i(t) \stackrel{\eqref{DCRthm:ziUpdate}}{=}  \alpha(t)x_i(t)\nonumber\\
& \Rightarrow z_i(t) = \frac{\int_0^t \alpha(u)x_i(u)du}{\int_0^t \alpha(u)du}\quad \forall i\in\Vcal.\nonumber
\end{align}
\end{remark*}


\section{Simulations}\label{sec:simulations}
In this section, we apply the distributed gradient algorithm to study the well-known linear regression problem in statistical machine learning, which is the most popular technique for data fitting \cite{HTFbook,SSBDbook}.  The goal of this problem is to find a linear relationship between a set of variables and some real value outcome. Here, we focus on quadratic loss functions, that is, given a training set $S = \{(x_i,y_i)\in\mathbb{R}^{d}\times\mathbb{R}\}$ for $i=1,\ldots,n$, we want to learn a parameter $w$ that minimizes the following least squares problem,
\begin{align}
\min_{w\in\Xcal}\sum_{i=1}^n (x_i^Tw - y_i)^2.\label{sim:prob}
\end{align}
We assume that the data sets are distributedly stored in a network of $n$ processors, i.e., each processor $i$ knows only the pair $(x_i,y_i)$.

For the purpose of simulations, we consider the discrete-time version of Algorithm \ref{alg:delay}, i.e., Eq. \eqref{DGM:discrete-time} with communication delays $\tau$.  We simulate for the case when $\Xcal = [-5,\ 5]^{d}$ where $d=10$, i.e., $w,\ x_i\in\mathbb{R}^{10}$. We consider simulated training data sets, i.e., $(x_i,y_i)$ are generated randomly with uniform distribution between $[0,1]$. We consider the performance of the distributed gradient algorithm on different sizes of network $\Gcal$, where each network is generated as follows.
\begin{enumerate}
\item In each network, we first randomly generate the nodes' coordinates in the plane with uniform distribution.
\item Then any two nodes are connected if their distance is less than a reference number $r$, e.g, $r = 0.6$ for our simulations.
\item Finally we check whether the network is connected. If not we return to step $1$ and run the program again.
\end{enumerate}
To implement our algorithm, the communication matrix $A$ is chosen as a lazy Metropolis matrix corresponding to $\Gcal$, i.e.,
\begin{align}
\A = [a_{ij}] = \left\{\begin{array}{ll}
\frac{1}{2(\max\{|\mathcal{N}_i| , |\mathcal{N}_j|\})}, & \text{ if } (i,j) \in \mathcal{E}\\
0, &\text{ if } (i,j)\notin\mathcal{E} \text{ and } i\neq j\\
1-\sum_{j\in\mathcal{N}_i}a_{ij},& \text{ if } i = j
\end{array}\right.\nonumber
\end{align}
It is straightforward to verify that the lazy Metropolis matrix $A$ satisfies Assumption \ref{assump:doublystochastic}. In all simulations considered herein, we set the stepsize $\alpha(k) = 1/\sqrt{k}$ for $k =  1,2,\ldots$ and $\alpha(0) = 1$.

In the sequel, we will compare the performance of the discretized version of distributed gradient (DG)  with distributed dual averaging (DA) \cite{Duchi2012,Rabbat2012c} for solving problem \eqref{sim:prob} in the delay-free case as well as in the case of constant delays. For DA, we chose the same stepsize $\alpha(k) = 1/\sqrt{k}$ as used in our algorithm. Simulations show that the distributed gradient algorithm outperforms distributed dual averaging in both cases.

\subsection{Delay-free case}
In the delay-free case, i.e., $\tau = 0$, we simulate DG and DA for three different sizes of networks, namely, $n = 30$, $n=40$, and $n=50$. In each simulation, we fix the number of iterations $t = 1000$ and output the worst-case distance of the function value to the optimal value, i.e., $\max_{i} |F(z_i(t))-f^*|$, where $z_i(t) = \frac{1}{T}\sum_{t=1}^{T} x_i(t)$. The simulations are shown in Fig. \ref{fig:nodelay}.

In these simulations, the performance of the DG algorithm is always slightly better than that of the DA algorithm, but overall they seem to share the same convergence rate $\mathcal{O}(\ln(t)/\sqrt{t})$, which agrees with the analytical result in Theorem \ref{DCRthm:ConvRate} and in \cite{Duchi2012,Nedic2009}.

\begin{figure}
\vspace{-3.5cm}\centering
\includegraphics[width=\columnwidth,keepaspectratio]{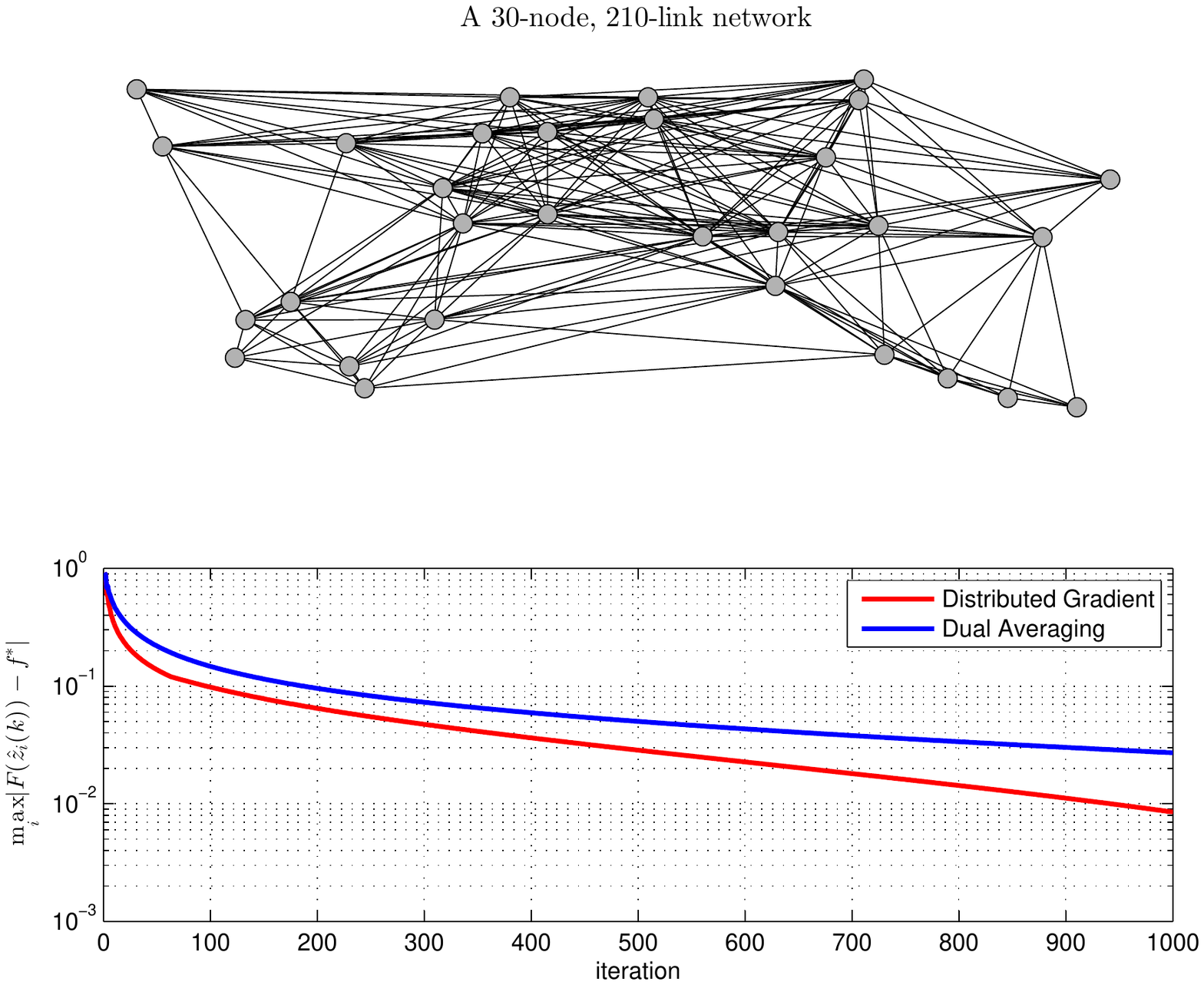}\vspace{-8cm}
\includegraphics[width=\columnwidth,keepaspectratio]{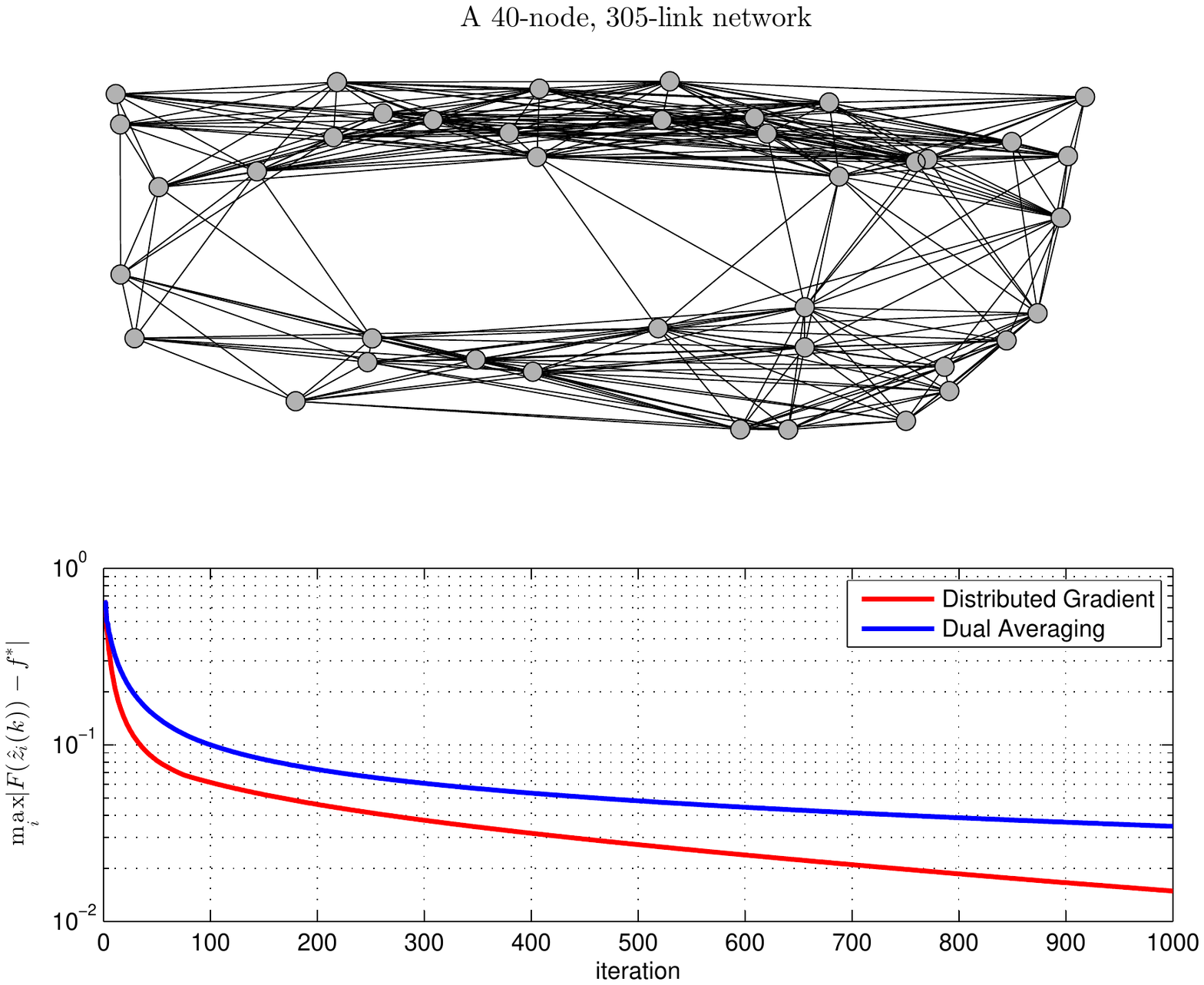}
\end{figure}
\begin{figure}
\vspace{-4cm}\centering
\includegraphics[width=\columnwidth,keepaspectratio] {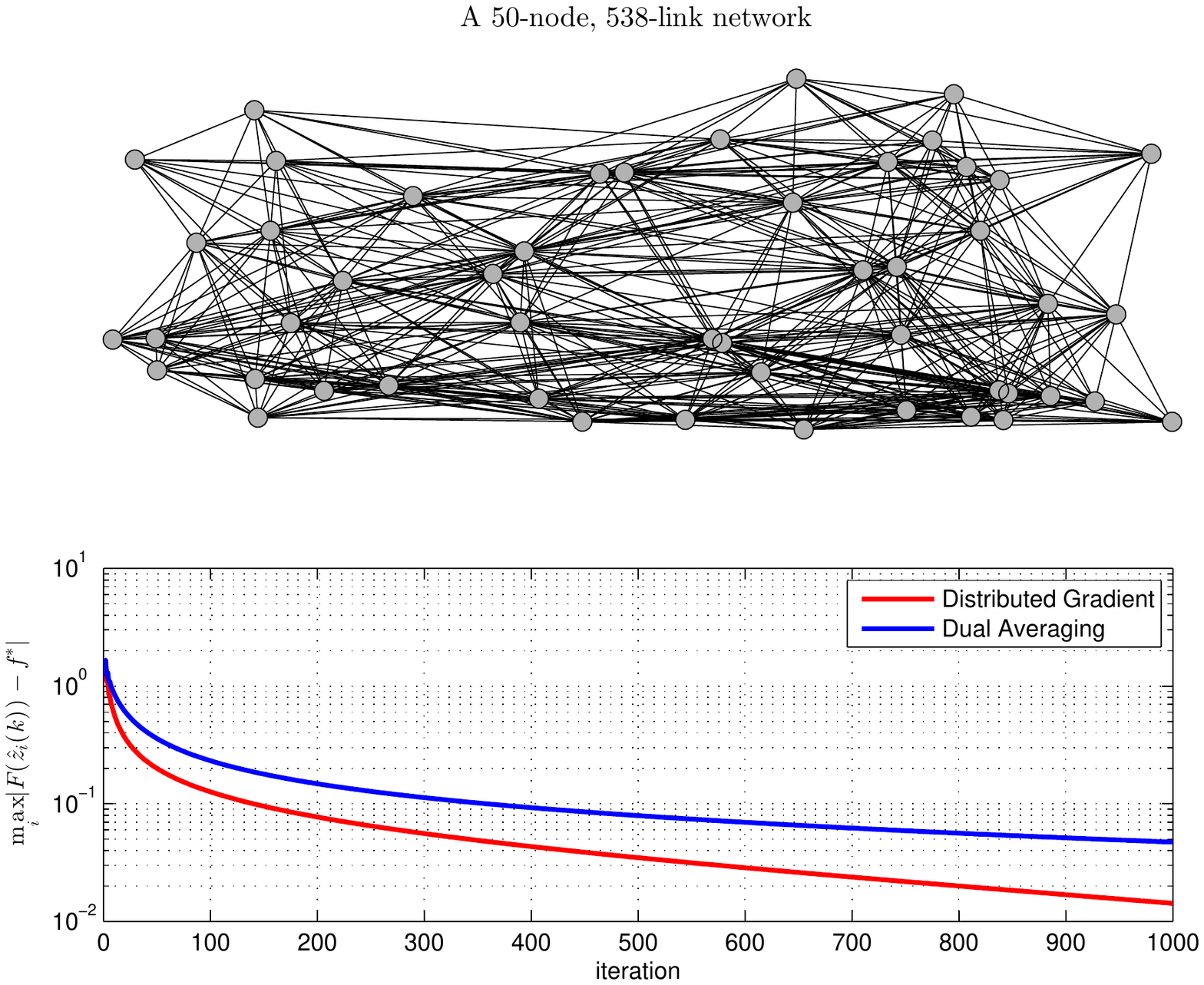}\vspace{-4.5cm}
\caption{Performance of DG and DA in delay-free networks.}\vspace{-0.5cm}
\label{fig:nodelay}
\end{figure}
\subsection{Uniform delays}
To study the impact of uniform communication delays on the performance of DG and DA, similar to the delay-free case we simulate the two algorithms for three different sizes of networks, namely, $n = 30$, $n=40$, and $n=50$. We implement DG and DA for each network, and terminate them when $\max_{i} |F(z_i(t))-f^*|\leq 0.2$. We let the delay constant $\tau$ run from $0$ to $10$ and output the number of iterations as a function on $\tau$. We plot the number of iterations as a function on the number of delay steps. The simulations are shown in Fig. \ref{fig:delay}.

We first note that the delays do influence the convergence rate of the two algorithms, that is, the greater the delay between nodes the more time the algorithms need to terminate. Second, as shown by the curve for DG the number of iterations seems to  increase as a cubic function of the number of delay steps, which agrees with our analysis in Theorem \ref{DCRthm:ConvRate}. Finally, in this example, uniform delays have a bigger impact on the performance of DA, that is, DA requires more iterations to converge than DG under the same number of delay steps.

\begin{figure}
\vspace{-4cm}\centering
\includegraphics[width=\columnwidth,,keepaspectratio]{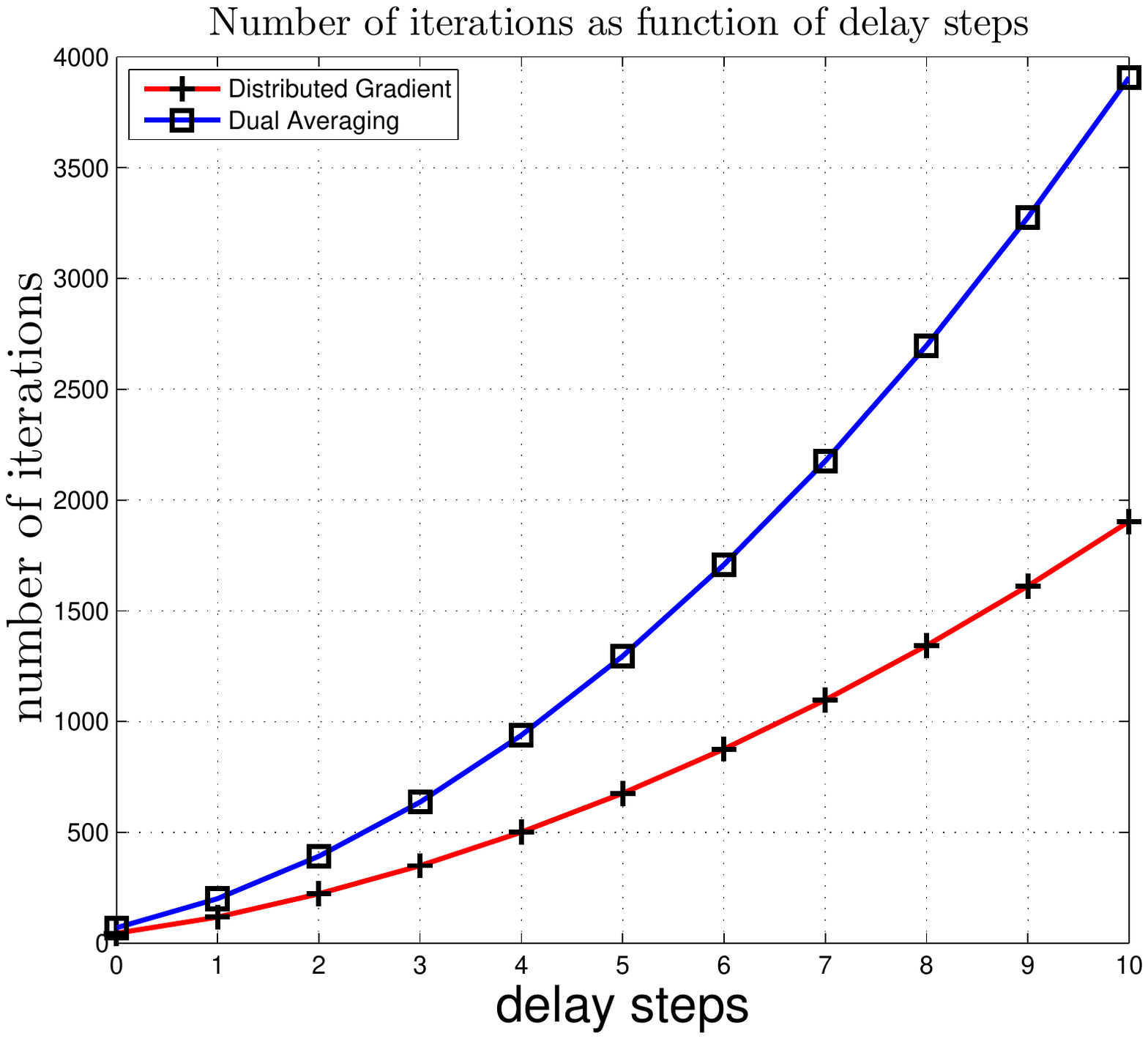}\vspace{-8cm}
\includegraphics[width=\columnwidth,,keepaspectratio]{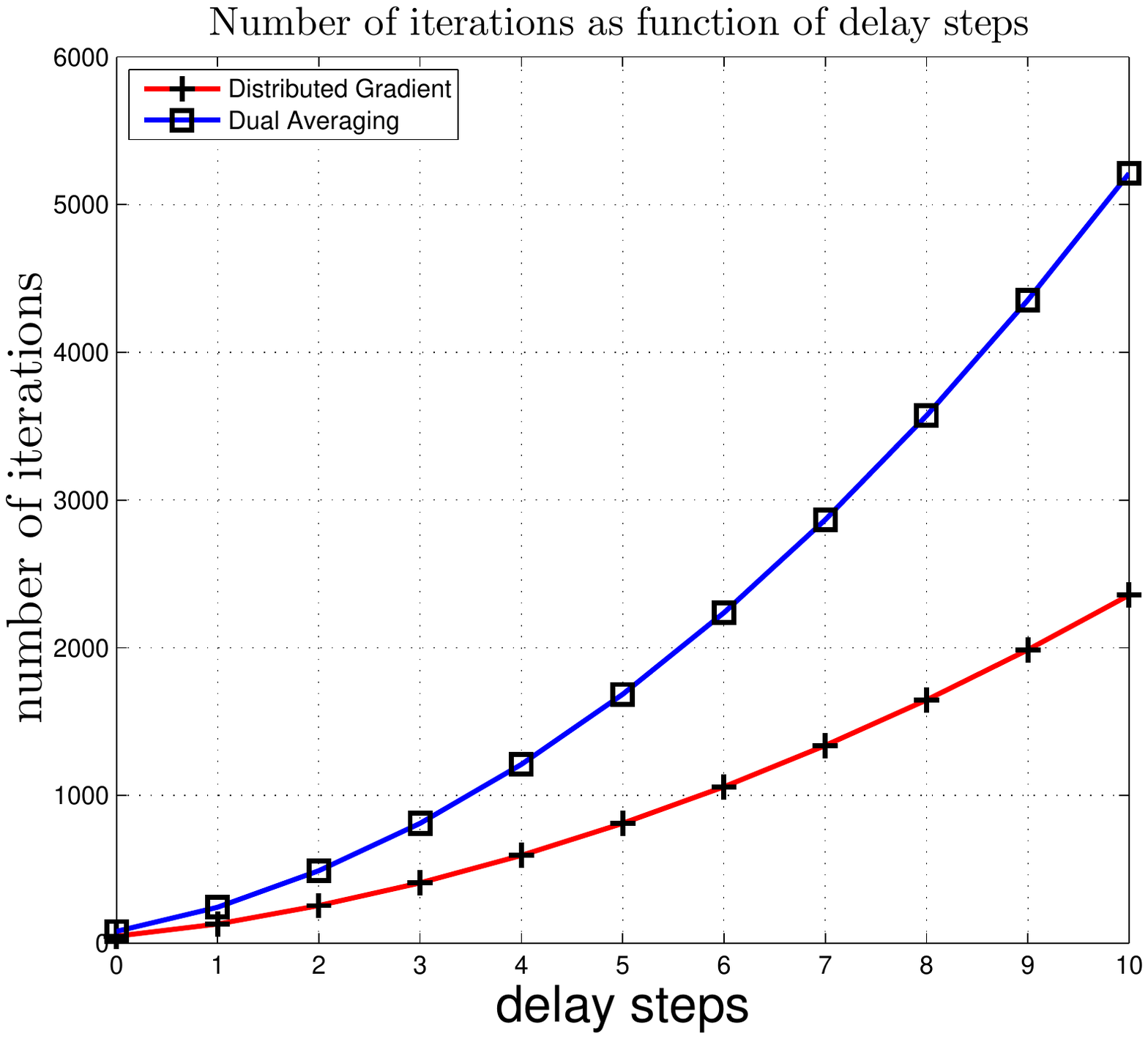}
\end{figure}
\begin{figure}
\vspace{-4cm}\centering
\includegraphics[width=\columnwidth,,keepaspectratio]{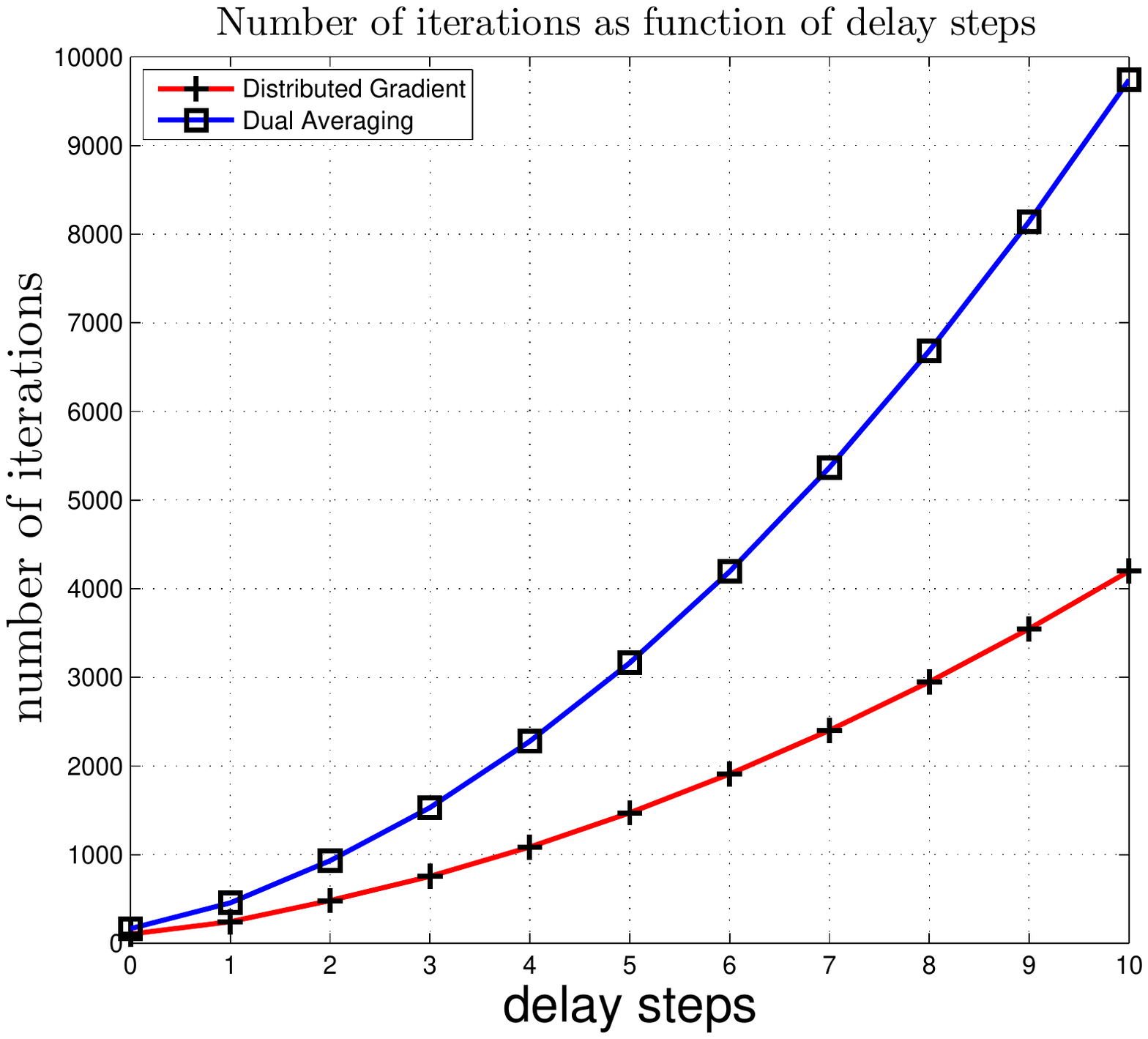}\vspace{-4.5cm}
\caption{Performance of DG and DA with delays.}\label{fig:delay}
\end{figure}

\section{Proofs of Main Results}\label{sec:Proofs}

We provide here the complete proof of the main results presented in Section \ref{sec:delay}. In the following Lemma, we first study some important properties for the projection error $\zeta_i$ , which can be viewed as the one-dimension version of Lemma \ref{lem:Projection} for the general convex set $\Xcal$, stated in the Appendix.

\begin{lemma}\label{lem_analysis:projection}
Suppose Assumptions \ref{assump:convexity}-- \ref{assump:doublystochastic} hold. Let $v_i(t), x_i(t)$ be updated by \eqref{analysis:viUpdate} and \eqref{analysis:xiUpdate} Moreover, let $\zeta(v_i(t)) = v_i(t) - \Pcal(v_i(t))$. Then for all $i\in\Vcal$ we have
\begin{enumerate}
\item For all $t\geq0$
\begin{align}
|\zeta_i(v_i(t))| \leq |\alpha(t)f_i'(x_i(t))| \leq C_i\alpha(t). \label{lem_analysis:errorbound1}
\end{align}
\item Given any feasible direction $r_i$, i.e., 
\begin{align}
\left\{\begin{array}{cc}
r_i \leq 0 &\text{ if }\; x_i(t) = b\\
r_i \geq 0 & \text{ if }\; x_i(t) = a 
\end{array}\right.\label{lem_analysis:feas_direct}
\end{align}
We have
\begin{align}
\left(v_i(t) - r_i\right)\zeta_i(v_i(t)) \geq [\zeta_i(v_i(t))]^2.\label{lem_analysis:errorbound2}
\end{align}
\end{enumerate} 
\end{lemma}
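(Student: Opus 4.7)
The plan is to proceed by a direct case analysis on the position of $x_i(t)$ in $\Xcal = [a,b]$, using the explicit componentwise description of $\Pcal$ recorded in \eqref{analysis:xiUpdate}. The preparatory step will be to split
\begin{align*}
v_i(t) = w_i(t) - \alpha(t) f_i'(x_i(t)), \qquad w_i(t) := -\beta x_i(t) + \beta \sum_{j=1}^n a_{ij} x_j(t-\tau),
\end{align*}
and to observe that $w_i(t)$ is always a feasible direction at $x_i(t)$. Indeed, since $\Pcal$ projects onto $\Tcal_{\Xcal}(x_i)$ and the initial data satisfies $\phi_i(t) \in \Xcal$ on $[-\tau,0]$, the trajectories obey $x_j(s) \in [a,b]$ for all $s \geq -\tau$; combined with $\sum_j a_{ij} = 1$ this yields $w_i(t) \geq 0$ whenever $x_i(t) = a$ and $w_i(t) \leq 0$ whenever $x_i(t) = b$.

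For part (1), when $x_i(t) \in (a,b)$ we have $\zeta_i(v_i(t)) = 0$ and there is nothing to prove. When $x_i(t) = a$, the preparatory step gives $v_i(t) \geq -\alpha(t) f_i'(x_i(t))$; either $v_i(t) \geq 0$, in which case $\zeta_i(v_i(t)) = 0$, or $v_i(t) < 0$, in which case $\zeta_i(v_i(t)) = v_i(t)$ and the lower bound on $v_i(t)$ simultaneously forces $f_i'(x_i(t)) > 0$ and yields $|\zeta_i(v_i(t))| = -v_i(t) \leq \alpha(t) f_i'(x_i(t)) = |\alpha(t) f_i'(x_i(t))|$. The case $x_i(t) = b$ is handled symmetrically. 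The second inequality in \eqref{lem_analysis:errorbound1} then follows immediately from Proposition \ref{prop:Lipschitz}, which, together with convexity and differentiability of $f_i$, gives $|f_i'(\xbf)| \leq C_i$ on $\Xcal$.

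For part (2), the first move will be to rewrite the claim: since $\zeta_i(v_i(t)) = v_i(t) - \Pcal(v_i(t))$, the desired inequality $(v_i - r_i)\zeta_i \geq \zeta_i^2$ is equivalent to $(\Pcal(v_i(t)) - r_i)\zeta_i(v_i(t)) \geq 0$. Again the interior case is trivial from $\zeta_i = 0$. When $x_i(t) = a$ and $v_i(t) < 0$, we have $\Pcal(v_i(t)) = 0$ and $\zeta_i(v_i(t)) = v_i(t) < 0$, while the feasibility condition \eqref{lem_analysis:feas_direct} forces $r_i \geq 0$; hence $(\Pcal(v_i(t)) - r_i)\zeta_i(v_i(t)) = -r_i v_i(t) \geq 0$. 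The case $x_i(t) = b$ is symmetric, with the signs of $v_i$, $\zeta_i$, and $r_i$ all reversed. I do not expect a serious obstacle: the argument is essentially bookkeeping once the decomposition $v_i = w_i - \alpha f_i'$ is in hand. The one subtle point to state cleanly at the outset is the invariance of $\Xcal$ under the projected flow \eqref{analysis:xiUpdate}, which underpins the sign of $w_i(t)$ at the boundary and is the only place where the structure of the distributed update enters the argument.
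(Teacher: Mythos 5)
Your proposal is correct and follows essentially the same route as the paper: the same case analysis on $x_i(t)\in(a,b)$, $x_i(t)=a$, $x_i(t)=b$, the same observation that the consensus term $-\beta x_i(t)+\beta\sum_j a_{ij}x_j(t-\tau)$ has the right sign at the boundary (which is how the paper bounds $|\zeta_i|$ by $|\alpha(t)f_i'(x_i(t))|$), and the same decomposition $(v_i-r_i)\zeta_i=\zeta_i^2+(\Pcal(v_i)-r_i)\zeta_i$ for part (2). Your explicit isolation of $w_i(t)$ and of the invariance of $\Xcal$ under the projected flow makes the argument slightly cleaner but is not a different proof.
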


\begin{proof}
\begin{enumerate}
\item Recall that $\zeta_i(v_i(t)) = v_i(t)-\mathcal{P}_{\Tcal_{\Xcal(x_i(t))}}$. Moreover, by \eqref{analysis:xiUpdate} we have the following three cases for all $i\in\Vcal:$ 
\begin{enumerate}
\item If $x_i(t)\in \Xcal = (a,b)$ then $\zeta_i(v_i(t)) = v_i(t) -v_i(t) = 0$.
\item If $x_i(t) = a$ then we have $0\leq \mathcal{P}_{\Tcal_{\Xcal(x_i(t))}} = v_i^{+}(t) = \max(0,v_i(t))$. If $v_i(t)\geq 0$ then $\zeta_i(v_i(t)) = 0$. Otherwise if $v_i(t)  = -\beta a + \beta\sum_{j=1}^{n}a_{ij}x_j(t-\tau) - \alpha(t)f_i'(x_i(t))<0$ then since $x_j(t-\tau)\in (a,b)$ we have $0\leq -\beta a + \beta\sum_{j=1}^{n}a_{ij}x_j(t-\tau)$. This implies that 
\begin{align*}
-\alpha(t)f_i'(x_i(t))\leq -\beta a + \beta\sum_{j=1}^{n}a_{ij}x_j(t-\tau)-\alpha(t)f_i'(x_i(t))\leq 0.
\end{align*} 
This implies that
\begin{align*}
|\zeta_i(v_i(t))| &= | v_i(t) - \mathcal{P}_{\Tcal_{\Xcal(x_i(t))}} | = |\beta\sum_{j=1}^{n}a_{ij}x_j(t-\tau) + \alpha(t)f_i'(x_i(t))| \nonumber\\ 
&\leq |\alpha(t)f_i'(x_i(t))|
\end{align*}
\item Finally, if $x_i(t) = b$ then $\mathcal{P}_{\Tcal_{\Xcal(x_i(t))}} = -v_i^{-}(t) = -\max(0,-v)\leq 0$. If $v_i(t) < 0$ then $\mathcal{P}_{\Tcal_{\Xcal(x_i(t))}} = v_i(t)$ implying $\zeta_i(v_i(t)) = 0$. Otherwise, if $v_i(t) \geq 0$ then $\mathcal{P}_{\Tcal_{\Xcal(x_i(t))}}=0$, which implies 
\begin{align*}
0&\leq -\beta x_i(t) +\beta\sum_{j=1}^{n}a_{ij}x_j(t-\tau)-\alpha(t)f_i(x_i(t))\nonumber\\ 
&= -\beta b +\beta\sum_{j=1}^{n}a_{ij}x_j(t-\tau) -\alpha(t)f_i(x_i(t))\displaybreak[0]\nonumber\\ 
&\leq \beta (b-\sum_{j=1}^{n}a_{ij}b) -\alpha(t)f_i(x_i(t))= -\alpha(t)f_i(x_i(t)).
\end{align*}  
Thus we have
\begin{align*}
|\zeta_i(v_i(t))| = |v_i(t)| &= |-\beta x_i(t) +\beta\sum_{j=1}^{n}a_{ij}x_j(t-\tau)-\alpha(t)f_i(x_i(t))|\nonumber\\
&\leq |\alpha(t)f_i(x_i(t))|
\end{align*}
\end{enumerate}
From these three cases, we have $|\zeta_i(v_i(t))|\leq |\alpha(t)f_i(x_i(t))|$, which by \eqref{prop_eq:Lipschitz} implies $|\zeta_i(v_i(t))|\leq C_i\alpha(t)$.

\item Let $r_i$ be a feasible direction, i.e., $r_i$ satisfies \eqref{lem_analysis:feas_direct}. Consider
\begin{align}
(v_i(t)-r_i)\zeta_i(v_i(t)) &= (v_i(t)-\Pcal(v_i(t))+\Pcal(v_i(t))-r_i)\zeta_i(v_i(t)) \nonumber\\ 
&= \zeta_i^2(v_i(t)) + (\Pcal(v_i(t))-r_i(t))\zeta_i(v_i(t))\nonumber\\
&= \zeta_i^2(v_i(t)) + \underbrace{(\Pcal(v_i(t))-r_i(t))(v_i(t)-\Pcal(v_i(t)))}_{q_i}\label{lem_analysis:Eq1}
\end{align}
We now investigate the second term of the previous relation for three cases
\begin{enumerate}
\item If $x_i(t)\in \Xcal = (a,b)$ then $\Pcal(v_i(t)) = v_i(t)$ implying $q_1= 0$.
\item If $x_i(t) = a$ then we have $0\leq \mathcal{P}_{\Tcal_{\Xcal(x_i(t))}} = v_i^{+}(t) = \max(0,v_i(t))$. If $v_i(t)\geq 0$ then $\Pcal(v_i(t)) = v_i(t)$ implying $q_i=0$. Otherwise if $v_i(t)<0$ then $\Pcal(v_i(t)) =0$. Since $x_i(t) = a$ we have $r_i\geq 0$, which implies $q_i\geq 0$ since $v_i(t)\leq 0$
\item Finally, if $x_i(t) = b$ then $\Pcal(v_i(t))  = -\max(0,-v)\leq 0$. If $v_i(t) < 0$ then $\Pcal(v_i(t)) = v_i(t)$ implying $q_i= 0$. Otherwise, if $v_i(t) \geq 0$ then $\Pcal(v_i(t))=0$. Since $x_i(t) = b$ we have $r_i \leq 0$, which implies $q_1\geq 0$ since $v_i(t)\geq 0$.
\end{enumerate}
Combining these three cases and by \eqref{lem_analysis:Eq1} we have \eqref{lem_analysis:errorbound2}.
\end{enumerate}
\end{proof}

\subsection{Proof of Lemma \ref{lem:pertbAverdelay}}\label{subsec:Lemma4_proof}

\begin{proof}
We start by introducing the following notation for convenience
\begin{align*}
\gbf(t)&=(I-\frac{1}{n}\mathbf{1}\mathbf{1}^T)\nabla F(\xbf(t)),\quad\quad \mathbf{h}(t) = \left(\I-\frac{1}{n}\1\1^T\right)\zetabf(\vbf(t))\quad\quad \ybf(t) = \xbf(t) - \bar{x}(t)\mathbf{1}.
\end{align*} 
\begin{itemize}
\item[(1) ] We first show the details of steps $(a)-(c)$ stated in the proof sketch of Lemma \ref{lem:pertbAverdelay}. 
\begin{itemize}
\item[(a)] By \eqref{analysis:xUpdate} and \eqref{analysis:xbarUpdate} we have,
\begin{align}
\dot{\ybf}(t) &= \dot{\xbf}(t) - \dot{\bar{x}}(t)\mathbf{1}\nonumber\\
&= -\beta\xbf(t) + \beta \A\xbf(t-\tau) + \beta\xbar(t)\1 - \beta \xbar(t-\tau)\1\nonumber\\ 
&\quad -\alpha(t)\nabla F(\xbf(t))  + \frac{\alpha(t)}{n}\1\1^T\nabla F(\xbf(t)) - \zetabf(\vbf(t)) + \frac{1}{n}\1\1^T\zetabf(\vbf(t)) \nonumber\\
&= -\beta(\xbf(t)-\xbar(t)\1) + \beta \A(\xbf(t-\tau) - \xbar(t-\tau)\1)\nonumber\\ 
&\quad  -\alpha(t)\left(\I-\frac{1}{n}\1\1^T\right)\nabla F(\xbf(t))-\left(\I-\frac{1}{n}\1\1^T\right)\zetabf(\vbf(t)) \nonumber\\
&= -\beta \ybf(t)+ \beta A \ybf(t-\tau)- \alpha(t)\gbf(t) -\mathbf{h}(t),\label{dplem:Eq1}
\end{align}
where the last equality is due to the fact that $\A$ is doubly stochastic. The solution of \eqref{dplem:Eq1} is then given as,
\begin{align}
\ybf(t)&= e^{-\beta t}\ybf(0)+ \beta \int_{0}^{t}e^{-\beta(t-u)}\A \ybf(u-\tau)du\nonumber\\ 
&\quad - \int_{0}^{t}e^{-\beta(t-u)}\left(\alpha(u)\gbf(u)+\mathbf{h}(u)\right)du.\label{dplem:Eq1a}
\end{align}
\item[(b)] Taking the $2-$norm of \eqref{dplem:Eq1a}, using the triangle inequality, and since $\|\ybf(0)\|_{2}\leq \|\xbf(0)\|_2$ we obtain
\begin{align}
\|\ybf(t)\|\leq &e^{-\beta t}\|\xbf(0)\|_2 + \int_{0}^{t}e^{-\beta(t-u)}\left(\alpha(u)\|\gbf(u)\|_2+\|\mathbf{h}(u)\|_2\right) du\nonumber\\
&+\beta\int_{0}^{t}e^{-\beta(t-u)}\|\A\ybf(u-\tau)\|_2du\label{dplem:Eq3a}
\end{align}
We first note that by the triangle inequality and \eqref{prop_eq:Lipschitz} we have
\begin{align}
\hspace{0.2cm}
\|\gbf(t)\|_2&=\left\|\left(\I-\frac{1}{n}\mathbf{1}\mathbf{1}^T\right)\nabla F(\mathbf{x}(t))\right\|_2\leq \|\nabla F(\mathbf{x}(t))\|_2=  \sqrt{\sum_{i=1}^n \left[f_i'(x_i(t))\right]^2}\nonumber\\
& \stackrel{\eqref{prop_eq:Lipschitz}}{\leq} \sqrt{\sum_{i=1}^n C_i^2}\leq C.\label{dplem:gbound}
\end{align}
Moreover, by \eqref{lem_analysis:errorbound1} we have 
\begin{align*}
\|\mathbf{h}(t)\|_2 = \left\|\left(\I-\frac{1}{n}\1\1^T\right)\zetabf(\vbf(t))\right\|_2\leq C\alpha(t).
\end{align*}
Substituting the previous relation and \eqref{dplem:gbound} into \eqref{dplem:Eq3a} we have
\begin{align}
\|\ybf(t)\|  & \leq e^{-\beta t}\|\xbf(0)\|_2 + 2C\int_{0}^{t}e^{-\beta(t-u)}\alpha(u)du+\beta\int_{0}^{t}e^{-\beta(t-u)}\|\A\ybf(u-\tau)\|_2du.\label{dplem:Eq3b1}
\end{align}
Moreover, consider the second term on the right-hand side of \eqref{dplem:Eq3b} 
\begin{align*}
\int_{0}^{t}e^{-\beta(t-u)}\alpha(u)du &= \int_{0}^{t/2}e^{-\beta(t-u)}\alpha(u)du+\int_{0}^{t/2}e^{-\beta(t-u)}\alpha(u)du\nonumber\\
&\leq \int_{0}^{t/2}e^{-\beta(t-u)}du + \alpha(t/2)\int_{0}^{t/2}e^{-\beta(t-u)}du\nonumber\\
&\leq \frac{1}{\beta}e^{-\beta t/2} + \frac{\alpha(t/2)}{\beta},
\end{align*}
where the first inequality is due to $\{\alpha(t)\}$ is non-increasing with $\alpha(0) = 1$. Substituting the previous relation into \eqref{dplem:Eq3b1} we have
\begin{align}
\|\ybf(t)\|  & \leq e^{-\beta t}\|\xbf(0)\|_2 + \frac{2C}{\beta}e^{-\beta t/2} + \frac{2C\alpha(t/2)}{\beta}+\beta\int_{0}^{t}e^{-\beta(t-u)}\|\A\ybf(u-\tau)\|_2du.\label{dplem:Eq3b}
\end{align}

\item[(c)] We now obtain an upper bound for the last term in \eqref{dplem:Eq3b}. We first recall that $\ybf(t) = (\I-\frac{1}{n}\mathbf{1}\mathbf{1}^T)\mathbf{x}(t)$ implying $\ybf(t)\notin$ span$\{\mathbf{1}\}$ since $\mathbf{1}^T\ybf(t) = 0$. Moreover since $\A$ is doubly stochastic $\A^T\A$ is also doubly stochastic, implying $\A$ has one singular value equal to $1$ and all others strictly less than $1$. Thus, by the Courant-Fisher Theorem \cite{HJbook} we have $\|\A\ybf(t)\|_2\leq \sigma_2 \|\ybf(t)\|_2$ where $\sigma_2$ is the second largest singular value of $\A$. Hence, from \eqref{dplem:Eq3b} we have
\begin{align}
\|\ybf(t)\|
&\leq e^{-\beta t}\|\xbf(0)\|_2 + \frac{2C}{\beta}e^{-\beta t/2} + \frac{2C\alpha(t/2)}{\beta} + \beta\sigma_2\int_{0}^{t}e^{-\beta(t-u)}\|\ybf(u-\tau)\|_2du\nonumber\\
&\leq \frac{\|\xbf(0)\|_2+2C}{\beta}e^{-\beta t/2} + \frac{2C\alpha(t/2)}{\beta} + \beta\sigma_2\int_{0}^{t}e^{-\beta(t-u)}\|\ybf(u-\tau)\|_2du\nonumber\\
&= \mu(t) + \beta\sigma_2 \int_{0}^{t}e^{-\beta(t-u)}\|\ybf(u-\tau)\|_2du,\label{dplem:Eq3c}
\end{align}
where $\mu(t)$ is defined as
\begin{align}
&\mu(t) =  \frac{\|\xbf(0)\|_2+2C}{\beta}e^{-\beta t/2} + \frac{2C\alpha(t/2)}{\beta}.\label{dplem:Eq3d}
\end{align}
We now apply a delayed version of the $Gr\ddot{o}nwall$-$Bellman$ Inequality for integrals to achieve an upper bound on the integral in \eqref{dplem:Eq3c}. Let $w(t)$ be a function of $t$, defined as
\begin{align}
w(t) = \int_{0}^{t}e^{\beta u}\|\ybf(u-\tau)\|_2du.\nonumber
\end{align}
By \eqref{dplem:Eq3c} we have $\|\ybf(t)\|\leq \mu(t) + \beta \sigma_2e^{-\beta t} w(t)$.  In addition, $w(t)$ is an incereasing function on $t$ with $w(0) =0$. Consider
\begin{align*}
\dot{w}(t) &= e^{\beta t} \|\ybf(t-\tau)\|_2\leq e^{\beta t}\left( \mu(t-\tau) + \beta \sigma_2 e^{-\beta(t-\tau)} w(t-\tau)\right)\nonumber\\
&= e^{\beta t} \mu(t-\tau) + \sigma_2\beta e^{\beta\tau} w(t-\tau)\leq e^{\beta t} \mu(t-\tau) + \sigma_2\beta e^{\beta\tau} w(t),
\end{align*}
where the last inequality is due to $w(t)$ is increasing, i.e., $w(t-\tau)\leq w(t)$. The preceding relation implies
\begin{align*}
\dot{w}(t) - \sigma_2\beta e^{\beta\tau} w(t) \leq e^{\beta t} \mu(t-\tau),
\end{align*}
which by multiplying both sides by $e^{-\sigma_2\beta e^{\beta\tau}t}$ we have
\begin{align*}
 \frac{d}{dt}\left(e^{-\sigma_2\beta e^{\beta\tau}t}w(t)\right) \leq e^{-\sigma_2\beta e^{\beta\tau}t}e^{\beta t} \mu(t-\tau).
\end{align*}
Taking the integeral from $0$ to $t$ on both sides of the previous equation and using $w(0) = 0$ we obtain
\begin{align}
w(t) \leq e^{\sigma_2\beta e^{\beta\tau}t}\int_{0}^{t}e^{\beta(1-\sigma_2 e^{\beta\tau})u}\mu(u-\tau) du.\label{dplem:Eq4a}
\end{align}
Thus since $\|\ybf(t)\|\leq \mu(t) + \beta \sigma_2e^{-\beta t} w(t)$ and by \eqref{dplem:Eq4a} we have
\begin{align}
\|\ybf(t)\| \leq \mu(t) + \beta\sigma_2\int_{0}^{t}e^{-\beta(1-\sigma_2 e^{\beta\tau})(t-u)}\mu(u-\tau)du,\label{dplem:Eq4b}
\end{align}
which is \eqref{DPAlem:AverIneq} since $\gamma =\sigma_2 e^{\beta\tau}$.
\end{itemize}

\item[(2).] We now show \eqref{DPAlem:AsympConv}. Since $\lim_{t\rightarrow\infty}\alpha(t) = 0$ we first have $\lim_{t\rightarrow\infty}\mu(t) = 0$ by \eqref{dplem:Eq3d}. Second, Eq. \eqref{dplem:Eq4b} can be written as
\begin{align}
\int_{0}^{t}e^{-\beta(1-\gamma)(t-u)}\mu(u-\tau) du = &\frac{\|\xbf(0)\|_2+2C}{\beta}\int_{0}^{t}e^{-\beta(1-\gamma)(t-u)}e^{-\beta(u-\tau)/2}du\nonumber\\
&+\frac{2C}{\beta}\int_{u=0}^{t}e^{-\beta(1-\gamma)(t-u)}\alpha((u-\tau)/2)du.\label{dplem:Eq5}
\end{align}
On the one hand, taking the limit as $t\rightarrow\infty$ on the first term on the right hand side of \eqref{dplem:Eq5} gives,
\begin{align}
\lim_{t\rightarrow\infty} \int_{0}^{t}e^{-\beta(1-\gamma)(t-u)}e^{-\beta(u-\tau)/2}du &=\lim_{t\rightarrow\infty} e^{-\beta(1-\gamma)t+\beta\tau/2}\int_{u=0}^{t}e^{\beta(1/2-\gamma) u}du\nonumber\\
&= e^{\beta\tau/2}\lim_{t\rightarrow\infty} e^{-\beta(1-\gamma)t}\frac{e^{\beta(1/2-\gamma) t}-1}{\beta(1/2-\gamma)}= 0. \label{dplem:Eq5a}
\end{align}
On the other hand, consider the second term in  \eqref{dplem:Eq5},
\begin{align}
&\lim_{t\rightarrow\infty}\int_{u=0}^{t}e^{-\beta(1-\gamma)(t-u)}\alpha((u-\tau)/2)du\nonumber\\
&= \lim_{t\rightarrow\infty}\int_{u=0}^{t/2}e^{-\beta(1-\gamma)(t-u)}\alpha((u-\tau)/2)du + \lim_{t\rightarrow\infty}\int_{u=t/2}^{t}e^{-\beta(1-\gamma)(t-u)}\alpha((u-\tau)/2)du\nonumber\\
&\leq \lim_{t\rightarrow\infty}\int_{u=0}^{t/2}e^{-\beta(1-\gamma)(t-u)}du + \lim_{t\rightarrow\infty}\alpha((u-2\tau)/4)\int_{u=t/2}^{t}e^{-\beta(1-\gamma)(t-u)}du\nonumber\\
&\leq \lim_{t\rightarrow\infty}\frac{e^{-\beta(1-\gamma)t/2}}{\beta(1-\gamma)} + \lim_{t\rightarrow\infty}\frac{\alpha((u-2\tau)/4)}{\beta(1-\gamma)} = 0,\label{dplem:Eq5b}
\end{align}
where the last equality is due to $\gamma\in (0,1)$ and $\lim_{t\rightarrow\infty}\alpha(t)= 0.$ Using the preceding relation and \eqref{dplem:Eq5a} into  \eqref{dplem:Eq5} we have
\begin{align}
\lim_{t\rightarrow\infty}\int_{0}^{t}e^{-\beta(1-\gamma)(t-u)}\mu(u-\tau) = 0,
\end{align}
which together with $\lim_{t\rightarrow\infty} \mu(t)=0$ and by \eqref{DPAlem:AverIneq} give \eqref{DPAlem:AsympConv}.
\item[(3)]
Recall from \eqref{DPAlem:AverIneq} that
\begin{align}
\int_{0}^{t}\alpha(u)\|\ybf(u)\|_2 du &\leq \int_{0}^{t}\alpha(u)\mu(u)du+ \int_{u=0}^{t}\alpha(u)\int_{s=0}^{u}e^{-\beta(1-\gamma)(u-s)}\mu(s-\tau) ds du\label{dplem:Eq6}.
\end{align}
where 
\begin{align*}
&\mu(t) =  \frac{\|\xbf(0)\|_2+2C}{\beta}e^{-\beta t/2} + \frac{2C\alpha(t/2)}{\beta}.
\end{align*}
We first analyze the first-term on the right-hand side of \eqref{dplem:Eq6}. Si
\begin{align}
\int_{0}^{t}\alpha(u)\mu(u)du&\leq \frac{\|\xbf(0)\|_2+2C}{\beta}\int_{0}^{t}\alpha(u)e^{-\beta u/2}du + \int_{0}^{t}\alpha(u)\frac{2C\alpha(t/2)}{\beta}du\nonumber\\
&\leq  \frac{\|\xbf(0)\|_2+2C}{\beta}\int_{0}^{t}e^{-\beta u/2}du + \frac{2C}{\beta}\int_{0}^{t}\alpha^2(u/2)du\nonumber\\
&\leq  \frac{2\|\xbf(0)\|_2+4C}{\beta^2}+ \frac{2C}{\beta}\int_{0}^{t}\alpha^2(u/2)du,\label{dplem:Eq7}
\end{align}
where the second inequality is due to $\alpha(t)$ is non-increasing, positive, and $\alpha(0) = 1$. 
Second, we now consider the second term on the right-hand side of \eqref{dplem:Eq6}. We first have
\begin{align}
&\int_{u=0}^{t}\alpha(u)\int_{s=0}^{u}e^{-\beta(1-\gamma)(u-s)}e^{-\beta(s-\tau)/2}dsdu\nonumber\\
&\leq e^{\beta\tau/2}\int_{u=0}^{t}\int_{s=0}^{u}e^{-\beta(1-\gamma)u} e^{\beta(1-\gamma)s/2}dsdu\displaybreak[0]\nonumber\\ 
&\leq \frac{2e^{\beta\tau/2}}{\beta(1-\gamma)}\int_{0}^{t}e^{-\beta(1-\gamma)u/2}du\leq \frac{4e^{\beta\tau/2}}{\beta^2(1-\gamma)^2}.\label{dplem:Eq8a}
\end{align}
We now consider 
\begin{align}
&\int_{u=0}^{t}\alpha(u)\int_{s=0}^{u}e^{-\beta(1-\gamma)(u-s)}\alpha((t-\tau)/2)dsdu\leq \int_{u=0}^{t}\int_{s=0}^{u}e^{-\beta(1-\gamma)(u-s)}\alpha^2((s-\tau)/2)dsdu\nonumber\\
&= \int_{u=0}^{t}e^{-\beta(1-\gamma)u}\left(\int_{s=0}^{u/2}e^{\beta(1-\gamma)s}\alpha^2((s-\tau)/2)ds+\int_{s=u/2}^{u}e^{\beta(1-\gamma)s}\alpha^2((s-\tau)/2)ds\right)du \nonumber\\
&\leq \int_{u=0}^{t}e^{-\beta(1-\gamma)u}\left(\int_{s=0}^{u/2}e^{\beta(1-\gamma)s}ds+\alpha^2((s-2\tau)/4)\int_{s=u/2}^{u}e^{\beta(1-\gamma)s}ds\right)du \nonumber\\
&\leq \frac{1}{\beta(1-\gamma)}\int_{u=0}^{t}e^{-\beta(1-\gamma)u/2}+\alpha^2((s-2\tau)/4) du \nonumber\\
&\leq \frac{2}{\beta^2(1-\gamma)^2} + \frac{1}{\beta(1-\gamma)}\int_{u=0}^{t}\alpha^2((s-2\tau)/4) du \nonumber\\
\label{dplem:Eq8b}
\end{align}
Substituting \eqref{dplem:Eq8a} into \eqref{dplem:Eq8b} into the second term on the right-hand side of \eqref{dplem:Eq6} we obtain
\begin{align}
&\int_{u=0}^{t}\alpha(u)\int_{s=0}^{u}e^{-\beta(1-\gamma)(u-s)}\mu(s-\tau) ds du\nonumber\\
&\leq\frac{4\left(\|\xbf(0)\|_2+2C\right)e^{\beta\tau/2}}{\beta^3(1-\gamma)^2}+\frac{4C}{\beta^3(1-\gamma)^2}+\frac{2C}{\beta^2(1-\gamma)}\int_{0}^{t}\alpha^2(\gamma u/4-\tau)du\nonumber\\
&\leq\frac{4\left(\|\xbf(0)\|_2+3C\right)e^{\beta\tau/2}}{\beta^3(1-\gamma)^2}+\frac{2C}{\beta^2(1-\gamma)}\int_{0}^{t}\alpha^2(\gamma u/4-\tau)du. \label{dplem:Eq9}
\end{align}
By adding \eqref{dplem:Eq9} to \eqref{dplem:Eq7} we obtain from \eqref{dplem:Eq6} that
\begin{align}
&\int_{0}^t \alpha(u)\|\ybf(u)\|_2du\nonumber\\
&\leq \frac{2\|\xbf(0)\|_2+4C}{\beta^2}+ \frac{2C}{\beta}\int_{0}^{t}\alpha^2(u/2)du\nonumber\\ 
&\quad +\frac{4\left(\|\xbf(0)\|_2+3C\right)e^{\beta\tau/2}}{\beta^3(1-\gamma)^2}+\frac{2C}{\beta^2(1-\gamma)}\int_{0}^{t}\alpha^2(\gamma u/4-\tau)du\nonumber\\
&\leq \frac{8\left(\|\xbf(0)\|_2+2C\right)e^{\beta\tau/2}}{\beta^3(1-\gamma)^2}+ \frac{4C}{\beta^2(1-\gamma)}\int_{0}^{t}\alpha^2(\gamma u/4-\tau)du,\label{dplem:Eq10}
\end{align}
where the last inequality is due to $\gamma\in(0,1)$ and $\alpha(t)$ is non-increasing positive sequence, i.e., $\alpha^2(u/2)\leq \alpha^2(\gamma u/4-\tau)$ for $\tau > 0$. This shows  \eqref{DPAlem:Upperbound}. 
\end{itemize}
\end{proof}

\subsection{Proof Theorem \ref{DCRthm:ConvRate}}\label{subsec:DelayProofs_rate}

\begin{proof}
Let $x^*$ be a solution of problem \eqref{prob:obj}. Consider a candidate Razumikhin-Krasovskii Lyapunov functional $V$ \cite{HLbook} defined as
\begin{align}
V(\bar{x}(t)) = \frac{1}{2}(\bar{x}(t)-x^*)^2 + \frac{\beta}{2} \int_{t-\tau}^t (\bar{x}(s)-x^*)^2ds,\quad t\geq0\label{DCRthm:Lyapunov},
\end{align}
whose derivative is given as
\begin{align}
&\dot{V}(\bar{x}(t))\nonumber\\
&= (\bar{x}(t)-x^*)\dot{\bar{x}}+ \frac{\beta}{2}\Big[(\bar{x}(t)-x^*)^2-(\bar{x}(t-\tau)-x^*)^2\Big]\nonumber\\
&= (\bar{x}(t)-x^*)\Big( - \beta\bar{x}(t)+\beta\bar{x}(t-\tau)-\frac{\alpha(t)}{n}\sum_{i=1}^n f_i'(x_i(t)) - \bar{\zeta}(t)\Big)+\frac{\beta(\bar{x}(t)-x^*)^2-\beta(\bar{x}(t-\tau)-x^*)^2}{2}\nonumber\\
&= \underbrace{-\frac{\alpha(t)}{n}\sum_{i=1}^n (\bar{x}(t)-x^*) f_i'(x_i(t))}_{W_1} \underbrace{-\frac{1}{n}\sum_{i=1}^n (\bar{x}(t)-x^*) z_i(x_i(t))}_{W_2}\nonumber\\ 
&\quad +\beta(\bar{x}(t)-x^*)(\bar{x}(t-\tau) - \bar{x}(t)) + \frac{\beta(\bar{x}(t)-x^*)^2-\beta(\bar{x}(t-\tau)-x^*)^2}{2}\nonumber\\
&= W_1+W_2 -\frac{\beta(\bar{x}(t)-\bar{x}(t-\tau))^2}{2}\nonumber\\ 
&\quad + \frac{\beta(\bar{x}(t-\tau)-x^*)^2-\beta(\bar{x}(t)-x^*)^2}{2}+ \frac{\beta(\bar{x}(t)-x^*)^2-\beta(\bar{x}(t-\tau)-x^*)^2}{2}\nonumber\\
&= W_1+ W_2 -\frac{\beta}{2}(\bar{x}(t)-\bar{x}(t-\tau))^2\leq W_1+W_2.\label{DCRthm:Eq1}
\end{align}
We first have
\begin{align}
W_1 &= -\frac{\alpha(t)}{n}\sum_{i=1}^n(\bar{x}(t)-x_i(t) + x_i(t) - x^*)f_i'(x_i(t))\nonumber\\
&= -\frac{\alpha(t)}{n}\sum_{i=1}^n(\bar{x}(t)-x_i(t))f_i'(x_i(t)) -\frac{\alpha(t)}{n} \sum_{i=1}^n(x_i(t) - x^*)f_i'(x_i(t))\nonumber\\
&\leq \frac{\alpha(t)}{n}\sum_{i=1}^n|\bar{x}(t)-x_i(t)|\;|f_i'(x_i(t))| -\frac{\alpha(t)}{n}(F(\xbf(t))-f^*)\nonumber\\
&\leq \frac{\alpha(t)C}{n}\|\xbf(t)-\bar{x}(t)\mathbf{1}\|_{2} - \frac{\alpha(t)}{n}(F(\xbf(t))-f^*)\nonumber\\
&= \frac{\alpha(t)C}{n}\|\xbf(t)-\bar{x}(t)\mathbf{1}\|_{2} - \frac{\alpha(t)}{n}(F(\xbf(t))-F(\bar{x}(t)\mathbf{1}))-\frac{\alpha(t)}{n}(F(\bar{x}(t)\mathbf{1}) -f^*)\nonumber\\
&\leq \frac{2\alpha(t)C}{n}\|\xbf(t)-\bar{x}(t)\mathbf{1}\|_2 - \frac{\alpha(t)}{n}(F(\bar{x}(t)\mathbf{1})-f^*).\label{DCRthm:Eq1c}
\end{align}
Second, let $r_i(t)$ be defined as
\begin{align*}
r_i(t) = x^* - x_i(t) - \beta x_i(t) + \beta\sum_{j=1}^n a_{ij}x_j(t-\tau),
\end{align*}
and recall from \eqref{lem_analysis:feas_direct} that $r_i(t)$ is a feasible direction if 
\begin{align}
\left\{\begin{array}{cc}
r_i \leq 0 &\text{ if }\; x_i(t) = B\\
r_i \geq 0 & \text{ if }\; x_i(t) = 0 
\end{array}\right.\label{DCRthm:Eq1d}
\end{align}
Indeed, if $x_i(t) = 0$ then $r_i(t)\geq$ since $x^*,x_j(t-\tau)\in(0,B)$ $\forall j\in\Vcal$ and $\A$ is doubly stochastic. On the other hand, if $x_i(t) = B$ then $r_i(t) = x^* +\beta\sum_{j=1}^na_{ij}x_j(t-\tau) - (1+\beta)B\leq 0$. Thus, we have $r_i(t)$ is a feasible direction, i.e., $r_i(t)$ satisfies \eqref{DCRthm:Eq1d}. We now consider the term $W_2$
\begin{align}
&W_2= -\frac{1}{n}\sum_{i=1}^n (\bar{x}(t)-x^*) \zeta_i(t))\nonumber\\
&= -\frac{1}{n} \sum_{i=1}^n\left(\xbar(t)-(1+\beta)x_i(t) + \beta\sum_{j=1}^n a_{ij}x_j(t-\tau) - v_i(t)\right)\zeta_i(t)\nonumber\\ 
&\quad - \frac{1}{n} \sum_{i=1}^n\left(v_i(t) + (1+\beta)x_i(t) - \beta\sum_{j=1}^n a_{ij}x_j(t-\tau)-x^*\right)\zeta_i(t)\nonumber\\
&= -\frac{1}{n} \sum_{i=1}^n\left(\xbar(t)-(1+\beta)x_i(t) + \beta\sum_{j=1}^n a_{ij}x_j(t-\tau) - v_i(t)\right)\zeta_i(t)\nonumber\\ 
&\quad - \frac{1}{n} \sum_{i=1}^n\left(v_i(t)-r_i(t)\right)\zeta_i(t), \label{DCRthm:Eq2}
\end{align}
where by \eqref{analysis:viUpdate} the first sum is equivalent to 
\begin{align*}
&-\frac{1}{n} \sum_{i=1}^n\left(\xbar(t)-(1+\beta)x_i(t) + \beta\sum_{j=1}^n a_{ij}x_j(t-\tau) - v_i(t)\right)\zeta_i(t)\nonumber\\
& = -\frac{1}{n} \sum_{i=1}^n\left(\xbar(t)-x_i(t)+\alpha(t)f_i'(x_i(t))\right)\zeta_i(t)\displaybreak[0]\nonumber\\
&\leq \frac{1}{n}\sum_{i=1}^n|\xbar(t) - x_i(t)||\zeta_i(t)| + \frac{1}{n} \sum_{i=1}^n|\alpha(t)f_i'(x_i(t))||\zeta_i(t)|\nonumber\\
&\overset{\eqref{lem_analysis:errorbound1}}{\leq} \frac{C \alpha(t)}{n}\|\xbf(t) - \xbar(t)\mathbf{1}\|_2 + \frac{C^2\alpha^2(t)}{n}.
\end{align*}
In addition, since $r_i(t)$ is a feasible direction, by \eqref{lem_analysis:errorbound2} the second sum in \eqref{DCRthm:Eq2} is upper bounded by  
\begin{align*}
 &- \frac{1}{n} \sum_{i=1}^n\left(v_i(t)-r_i(t)\right)\zeta_i(t) \leq  - \frac{1}{n} \sum_{i=1}^n\zeta_i^2(t) = -\frac{1}{n}\|\zetabf(t)\|_2^2.
\end{align*}
Applying the preceding two relations into \eqref{DCRthm:Eq2} we obtain
\begin{align}
W_2 \leq \frac{C \alpha(t)}{n}\|\xbf(t) - \xbar(t)\mathbf{1}\|_{2} + \frac{C^2\alpha^2(t)}{n} - \frac{1}{n}\|\zetabf(t)\|_2^2\leq \frac{C \alpha(t)}{n}\|\xbf(t) - \xbar(t)\mathbf{1}\|_{2} + \frac{C^2\alpha^2(t)}{n}. \label{DCRthm:Eq3}
\end{align}
Thus, substituting \eqref{DCRthm:Eq1c} amd \eqref{DCRthm:Eq3} into \eqref{DCRthm:Eq1} we obtain 
\begin{align}
\dot{V}(\bar{x}(t))&\leq \frac{3\alpha(t)C}{n}\|\xbf(t)-\bar{x}(t)\mathbf{1}\|_2  +\frac{C^2\alpha^2(t)}{n}- \frac{\alpha(t)}{n}(F(\bar{x}(t)\mathbf{1})-f^*) .\label{DCRthm:Eq4} 
\end{align}
By \eqref{DPAlem:Upperbound} in Lemma \ref{lem:pertbAverdelay} we have
\begin{align}
\int_{0}^t \alpha(u)\|\ybf(u)\|_2du\leq \frac{8\left(\|\xbf(0)\|_2+2C\right)e^{\beta\tau/2}}{\beta^3(1-\gamma)^2}+ \frac{4C}{\beta^2(1-\gamma)}\int_{0}^{t}\alpha^2(\gamma u/4-\tau)du\cdot
\label{DCRthm:Eq5}
\end{align}
Under the assumptions on $\alpha(t)$, i.e., $\alpha(t) = 1$ for $t\leq 1$ and $\alpha(t) = 1/\sqrt{t}$ for $t\geq1$, consider the following
\begin{align}
\int_{0}^{t}\alpha^2(\gamma u/4-\tau)du &= \frac{4}{\gamma}\int_{-\tau}^{\frac{\gamma t}{4}-\tau}\alpha^2(u)du = \frac{4}{\gamma}\int_{-\tau}^{1}\alpha^2(u)du+\frac{4}{\gamma}\int_{1}^{\frac{\gamma t}{4}-\tau}\alpha^2(u)du\nonumber\\
&= \frac{4(1+\tau)}{\gamma}+\frac{4}{\gamma}\int_{1}^{\frac{\gamma t}{4}-\tau}\frac{1}{t}du = \frac{4(1+\tau)}{\gamma}+\frac{4\ln(\frac{\gamma t}{4}-\tau)}{\gamma}\nonumber\\
&\leq \frac{4(1+\tau)}{\gamma}+\frac{4\ln(\gamma t-4\tau)}{\gamma}\cdot\label{DCRthm:Eq6}
\end{align}
Substituting \eqref{DCRthm:Eq6} into \eqref{DCRthm:Eq5} to obtain
\begin{align}
&3C\int_{0}^t \alpha(u)\|\ybf(u)\|_2du+\frac{C^2}{n}\int_{0}^t \alpha^2(u)du\nonumber\\
&\leq  \frac{24C\left(\|\xbf(0)\|_2+2C\right)e^{\beta\tau/2}}{\beta^3(1-\gamma)^2} +  \frac{48C^2(1+\tau)}{\beta^2\gamma(1-\gamma)}+C^2\ln(t)+ \frac{48C^2\ln(\gamma t-4\tau)}{\beta^2\gamma(1-\gamma)}\nonumber\\
&\triangleq \Gamma_0(t).\label{DCRthm:Eq7}
\end{align}
Taking the integral of both sides in \eqref{DCRthm:Eq1} and using \eqref{DCRthm:Eq4} we obtain
\begin{align}
V(\bar{x}(t)) - V(\bar{x}(0))&\leq \frac{3C}{n}\int_{0}^t \alpha(u)\|\ybf(u)\|_2du + \frac{C^2}{n}\int_{0}^t \alpha^2(u)du - \frac{1}{n}\int_{0}^t \alpha(u)(F(\bar{x}(u)\mathbf{1})-f^*)du\nonumber\\
&\leq \frac{\Gamma_0(t)}{n}- \frac{1}{n}\int_{0}^t \alpha(u)(F(\bar{x}(u)\mathbf{1})-f^*).\label{DCRthm:Eq8}
\end{align}
Rearranging \eqref{DCRthm:Eq8} and dropping $V(\bar{x}(t))$ gives
\begin{align}
&\int_{0}^t \alpha(u)(F(\bar{x}(u)\mathbf{1})-f^*)du\leq 2\Gamma_0(t)+ nV(\bar{x}(0)).\nonumber
\end{align}
Thus, dividing both sides of the preceding relation by $\int_{0}^t \alpha(u)du = 1 +  \int_{1}^t \frac{1}{\sqrt{u}}du \leq 2(\sqrt{t}-1)$ we obtain
\begin{align}
&\frac{\int_{0}^t \alpha(u)(F(\bar{x}(u)\mathbf{1})-f^*)du}{\int_{0}^t \alpha(u)du}\leq \frac{\Gamma_0(t)+ nV(\bar{x}(0))}{2(\sqrt{t}-1)},\nonumber
\end{align}
which by Jensen's inequality implies
\begin{align}
F\left(\!\!\!\begin{array}{c}
\frac{\int_{0}^t \alpha(u)\bar{x}(u)du}{\int_{0}^t \alpha(u)du}
\end{array}\mathbf{1}\right)-f^* &\leq \frac{\Gamma_0(t)+ nV(\bar{x}(0))}{2(\sqrt{t}-1)}\cdot\label{DCRthm:Eq9}
\end{align}
Moreover, we have
\begin{align}
F\left(\frac{\int_0^t \alpha(u)x_i(u)du}{\int_0^t \alpha(u)du}\mathbf{1}\right) - F\left(\!\!\!\begin{array}{c}
\frac{\int_{0}^t \alpha(u)\bar{x}(u)du}{\int_{0}^t \alpha(u)du}
\end{array}\!\!\!\mathbf{1}\right)\leq C\left |\begin{array}{c}
\frac{\int_{0}^t \alpha(u)(x_i(u)-\bar{x}(u))du}{\int_{0}^t \alpha(u)du}\end{array}\right|\stackrel{\eqref{DCRthm:Eq4}}{\leq}\frac{\Gamma_0(t)}{2(\sqrt{t}-1)}\cdot\label{DCRthm:Eq11}
\end{align}
Adding \eqref{DCRthm:Eq9} and \eqref{DCRthm:Eq11} we obtain \eqref{DCRthm:funcrate}, which conlcudes our proof. 
\end{proof}

\section{Concluding Remarks}\label{sec:conclusion}
In this paper we have studied a continuous-time distributed gradient-based consensus algorithm for network optimization problems, with the focus on uniform communication delays. We provided an explicit analysis on the rate of convergence of the algorithm as a function of the network size, topology, and communication delays, specifically the convergence time of the algorithm grows as a cubic function of the delays. We also simulate the performance of the distributed gradient algorithm for the delay-free case and with uniform delays for different network sizes, and compare with the performance of distributed dual averaging. Our simulation results suggest that distributed gradient outperforms dual averaging in both cases.

One interesting question left open in this paper is the study of asynchronous distributed gradient algorithms, that is, when communications delays are different at different nodes and perhaps change with time. In this more general case, it would be interesting to investigate whether an upper bound on the time-varying heterogeneous delays can be helpful in obtaining convergence results. In particular, a possible topic of future research would be to determine if one can obtain bounds on the error in the objective function by using an upper bound on the delays, along with our current results.


\newpage
\bibliographystyle{ACM-Reference-Format}
\bibliography{refs}


\appendix

\section{Appendix}

\subsection{Extension to $\mathbb{R}^d$}\label{subsec:appendix_Rdextend}
We present here a sketch of key steps to extend our analysis for the case $d\geq 1$. In this section, use uppercase letters in boldface for matrices $\Xbf$ in $\Rset^{n\times d}$.  We now have $\xbf_i\in\Rset^{d}$ for all $i\in\Vcal$ and $f_i:\Rset^{d}\rightarrow\Rset$. We define the following notation 
\begin{align*}
&\Xbf = \left(\begin{array}{ccc}
\xbf_1^T\\
\ldots\\
\xbf_n^T
\end{array}\right)\in\Rset^{n\times d},\quad \xbarbf = \frac{1}{n}\sum_{i=1}^n \xbf_i \in\Rset^{d},\quad \Xbarbf = \left(\begin{array}{c}
\bar{\xbf}^T\\ 
\ldots\\ 
\bar{\xbf}^T\\
\end{array}\right)\in\Rset^{n\times d},\\
&F(\Xbf) \triangleq \sum_{i=1}^n f_i(\xbf_i),\quad \nabla F(\Xbf) = \left(\begin{array}{ccc}
\nabla f_1^T(\xbf_1)\\ 
\ldots\\
\nabla f_n^T(\xbf_n)\\
\end{array}\right)\in\Rset^{n\times d}.
\end{align*}
Given a matrix $\A$ we denote its $i-$th row as $\abf_i^T\in\Rset^{1\times n}$, i.e.,  
\begin{align*}
&\A = \left(\begin{array}{c}
\abf_1^T\\ 
\ldots\\
\abf_n^T
\end{array}\right)\in\Rset^{n\times d}.
\end{align*}
Moreover, we write $\|\A\|_F$ as the Frobenius norm of $\A$. With these notations the updates in \eqref{analysis:viUpdate}--\eqref{analysis:xbarUpdate} can be rewritten as 
\begin{align*}
&\Vbf(t)
= -\beta\Xbf(t) + \beta \A\Xbf(t-\tau) - \alpha(t) \nabla F(\Xbf(t)),\\
&\dot{\Xbf}(t)
= \Pcal_{\Tcal_{\Xcal(\Xbf(t))}}[\Xbf(t)] = \Vbf(t)-\zetabf(\Vbf(t)),\\
&\bar{\vbf}(t) = -\beta\bar{\xbf}(t) + \beta\bar{\xbf}(t-\tau) - \frac{\alpha(t)}{n}\sum_{i=1}^n \nabla f_i(\xbf_i(t))\\
&\dot{\bar{\xbf}}(t) = \bar{\vbf}(t) - \bar{\zetabf}(t),
\end{align*}
where the projection $\Pcal_{\Tcal_{\Xcal(\Xbf(t))}}[\Xbf(t)]$ is the row-wise projection. Finally, we use the following result studied in \cite{Nedic2010a}, which is a general version of Lemma \ref{lem_analysis:projection}, to analyze the impact of the projection.
\begin{lemma}[Lemma $1$ \cite{Nedic2010a}]\label{lem:Projection}
Let $\mathcal{X}$ be a nonempty closed convex set in $\mathbb{R}^d$. Then, we have for any $\xbf\in\mathbb{R}^d$
\begin{enumerate}
\item[(a)] $(\mathcal{P}_{\mathcal{X}}[\xbf]-\xbf)^T(\xbf-\ybf)\leq -\|\mathcal{P}_{\mathcal{X}}[\xbf]-\xbf\|_2^2\;$ for all $\ybf\in\mathcal{X}$
\item[(b)] $\|\mathcal{P}_{\mathcal{X}}[\xbf]-\ybf\|_2^2\leq \|\xbf-\ybf\|_2^2 -\|\mathcal{P}_{\mathcal{X}}[\xbf]-\xbf\|_2^2\;$ for all $\ybf\in\mathcal{X}$
\end{enumerate}
\end{lemma}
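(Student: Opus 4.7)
The plan is to derive both inequalities in Lemma \ref{lem:Projection} from the standard variational characterization of the Euclidean projection onto a nonempty, closed, convex set $\mathcal{X}\subset\mathbb{R}^d$, namely
\[
(\xbf - \mathcal{P}_{\mathcal{X}}[\xbf])^T(\ybf - \mathcal{P}_{\mathcal{X}}[\xbf]) \leq 0 \quad \text{for all } \ybf \in \mathcal{X}.
\]
Since the lemma is quoted as Lemma $1$ of \cite{Nedic2010a}, the paper presumably defers to that reference, but the self-contained argument is short. I would organize it in three steps: establish the variational inequality, rearrange it to get (a), then expand a squared norm to deduce (b).

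First, to obtain the variational inequality I would use that $\mathcal{P}_{\mathcal{X}}[\xbf]$ is the unique minimizer of the strictly convex map $\zbf \mapsto \|\zbf - \xbf\|_2^2$ on $\mathcal{X}$; existence and uniqueness follow from nonemptiness, closedness, and strict convexity of the objective in finite dimensions. For any $\ybf \in \mathcal{X}$ and $\theta \in (0,1]$, convexity of $\mathcal{X}$ gives $\zbf_\theta := (1-\theta)\mathcal{P}_{\mathcal{X}}[\xbf] + \theta\ybf \in \mathcal{X}$, hence the minimization property yields $\|\zbf_\theta - \xbf\|_2^2 \geq \|\mathcal{P}_{\mathcal{X}}[\xbf] - \xbf\|_2^2$. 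Expanding, cancelling, and dividing by $\theta>0$ leaves a polynomial in $\theta$ whose $\theta\to 0^+$ limit is exactly the variational inequality displayed above. To prove (a), I would then write
\[
(\mathcal{P}_{\mathcal{X}}[\xbf] - \xbf)^T(\xbf - \ybf) = -\|\mathcal{P}_{\mathcal{X}}[\xbf] - \xbf\|_2^2 + (\mathcal{P}_{\mathcal{X}}[\xbf] - \xbf)^T(\mathcal{P}_{\mathcal{X}}[\xbf] - \ybf),
\]
and apply the variational inequality (with a sign flip) to conclude that the second term is nonpositive; this immediately yields the bound in (a).

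For (b), I would simply expand the squared norm on the left:
\[
\|\mathcal{P}_{\mathcal{X}}[\xbf] - \ybf\|_2^2 = \|\mathcal{P}_{\mathcal{X}}[\xbf] - \xbf\|_2^2 + 2(\mathcal{P}_{\mathcal{X}}[\xbf] - \xbf)^T(\xbf - \ybf) + \|\xbf - \ybf\|_2^2,
\]
and substitute the bound from (a) applied to the same $\ybf$, which dominates the cross term by $-2\|\mathcal{P}_{\mathcal{X}}[\xbf] - \xbf\|_2^2$. Cancelling one copy of $\|\mathcal{P}_{\mathcal{X}}[\xbf] - \xbf\|_2^2$ then produces (b). There is essentially no obstacle: in finite dimensions the $\theta\to 0^+$ step is elementary because every expression involved is quadratic in $\theta$. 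The only conceptual point worth underscoring is that each of the three hypotheses on $\mathcal{X}$ is used somewhere — nonemptiness and closedness to guarantee that the projection is attained, and convexity so that the interpolate $\zbf_\theta$ stays in $\mathcal{X}$ — and all three are explicitly assumed in the lemma.
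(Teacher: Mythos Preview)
Your argument is correct and is the standard derivation of these projection inequalities from the variational characterization of the Euclidean projection. The paper itself does not supply a proof of this lemma; it simply quotes it as Lemma~1 of \cite{Nedic2010a} and uses it as a black box in the appendix, so there is nothing to compare against beyond noting that your self-contained proof is exactly the conventional one.
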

We now present the analysis for the general versions of Lemma \ref{lem:pertbAverdelay} and Theorem \ref{DCRthm:ConvRate}, which are given in the following two lemmas.

\begin{lemma}\label{lem_const_case:pertbAverdelay}
Suppose Assumptions \ref{assump:convexity}-- \ref{assump:doublystochastic} hold. Let the trajectories of $\xbf_i(t)$ be updated by Algorithm \ref{alg:delay}. Let $\{\alpha(t)\}$ be a given positive scalar sequence with $\alpha(0) = 1$. Moreover, let $\beta \in (0,\frac{\ln(1/\sigma_2)}{\tau})$ and $\gamma =\sigma_2 e^{\beta\tau} \in (0,1)$. Then 
\begin{itemize}
\item[(1)] For all $t\geq 0$  we have
\begin{align}
\|\Xbf(t)-\bar{\Xbf}(t)\|_F \leq \mu(t) + \beta\sigma_2\int_{0}^{t}e^{-\beta(1-\gamma)(t-u)}\mu(u-\tau)du,\label{lem_const_case:AverIneq}
\end{align}
where
\begin{align}
&\mu(t) =  e \frac{\|\Xbf(0)\|_F+2C}{\beta}e^{-\beta t/2} + \frac{2C\alpha(t/2)}{\beta}.\label{lem_const_case:lambda}
\end{align}
\item[(2)]
If $\{\alpha(t)\}$  is a non-increasing positive scalar sequence such that $\lim_{t\rightarrow\infty}\alpha(t) = 0$ then we have
\begin{align}
\lim_{t\rightarrow\infty} \|\xbf_i(t) - \bar{\xbf}(t)\|_2 = 0\quad \text{for all } i=1,2\ldots,n.\label{lem_const_case:AsympConv}
\end{align}
\item[(3)] Further we have
\begin{align}
\int_{0}^t \alpha(u)\|\Xbf(u)-\bar{\Xbf}(u)\|_2du\leq \frac{8\left(\|\Xbf(0)\|_F+2C\right)e^{\beta\tau/2}}{\beta^3(1-\gamma)^2}+ \frac{4C}{\beta^2(1-\gamma)}\int_{0}^{t}\alpha^2(\gamma u/4-\tau)du.\label{lem_const_case:Upperbound}
\end{align}
\end{itemize}
\end{lemma}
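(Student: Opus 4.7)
The plan is to run exactly the scalar proof of Lemma~\ref{lem:pertbAverdelay}, upgraded so that column vectors become $n\times d$ matrices, the Euclidean $2$-norm becomes the Frobenius norm $\|\cdot\|_F$, and the one-dimensional projection result (Lemma~\ref{lem_analysis:projection}) is replaced by the general convex projection result (Lemma~\ref{lem:Projection}). The three-block structure (a)--(c) of the scalar argument transfers essentially verbatim, because the Frobenius norm behaves like the $2$-norm under the linear operations that drive the estimate, and all three conclusions reduce to scalar integral inequalities in $\|\Ybf(t)\|_F$ once the right bounds are in place.

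Concretely, first set $\Ybf(t):=\Xbf(t)-\Xbarbf(t)$ and, using $\A\Xbarbf(t)=\Xbarbf(t)$ (since $\Xbarbf$ has identical rows and $\A$ is doubly stochastic), derive the matrix ODE $\dot\Ybf(t)=-\beta\Ybf(t)+\beta\A\Ybf(t-\tau)-\alpha(t)\Gbf(t)-\Hbf(t)$, where $\Gbf(t)=(\I-\tfrac{1}{n}\1\1^T)\nabla F(\Xbf(t))$ and $\Hbf(t)=(\I-\tfrac{1}{n}\1\1^T)\zetabf(\Vbf(t))$. Pass to integral form, take Frobenius norms, and apply the triangle inequality. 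The three crucial bounds are: (i) $\|\Gbf(u)\|_F\le\|\nabla F(\Xbf(u))\|_F=\sqrt{\sum_i\|\nabla f_i(\xbf_i(u))\|_2^2}\le C$; (ii) $\|\Hbf(u)\|_F\le C\alpha(u)$, the matrix analogue of \eqref{lem_analysis:errorbound1}, obtained row-by-row; and (iii) $\|\A\Ybf\|_F\le \sigma_2\|\Ybf\|_F$. For (iii), each column $\ybf^{(k)}$ of $\Ybf$ satisfies $\1^T\ybf^{(k)}=0$, so the Courant--Fischer bound gives $\|\A\ybf^{(k)}\|_2\le\sigma_2\|\ybf^{(k)}\|_2$; summing squared column norms yields the Frobenius version. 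Plugging these into the integral expression produces $\|\Ybf(t)\|_F\le\mu(t)+\beta\sigma_2\int_0^t e^{-\beta(t-u)}\|\Ybf(u-\tau)\|_F\,du$, and applying the delayed Gr\"onwall--Bellman trick with $w(t)=\int_0^t e^{\beta u}\|\Ybf(u-\tau)\|_F\,du$ exactly as in \eqref{dplem:Eq4a}--\eqref{dplem:Eq4b} gives \eqref{lem_const_case:AverIneq}. Parts (2) and (3) then follow by plugging \eqref{lem_const_case:AverIneq} into the same scalar integral manipulations used in the one-dimensional proof, since they do not see the dimension $d$ at all.

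The only step that genuinely needs reworking is bound (ii) on $\|\Hbf(u)\|_F$. In the scalar proof this came from a case analysis at the endpoints of the interval $[a,b]$, which is unavailable for a general compact convex $\Xcal\subset\Rset^d$. The replacement argument is row-wise: for each $i$, observe that $\xbf_i(t)+\eta\bigl(-\beta\xbf_i(t)+\beta\sum_j a_{ij}\xbf_j(t-\tau)\bigr)$ lies in $\Xcal$ for all sufficiently small $\eta>0$ (because $\sum_j a_{ij}=1$, each $\xbf_j(t-\tau)\in\Xcal$, and $\Xcal$ is convex), so the non-gradient part of $\vbf_i(t)$ is itself a feasible direction at $\xbf_i(t)$. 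Hence by Lemma~\ref{lem:Projection}(b) applied to the tangent cone at $\xbf_i(t)$, the projection error $\|\zetabf_i(\vbf_i(t))\|_2$ is controlled by the only ``infeasible'' piece of $\vbf_i(t)$, namely $\alpha(t)\nabla f_i(\xbf_i(t))$, giving $\|\zetabf_i(\vbf_i(t))\|_2\le C_i\alpha(t)$ and therefore $\|\Hbf(u)\|_F\le C\alpha(u)$. This geometric point is the only place in the proof where the higher-dimensional setting requires extra care; once it is in hand, the rest is a direct Frobenius-norm transcription of the scalar argument of Section~\ref{subsec:Lemma4_proof}.
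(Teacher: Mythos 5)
Your proposal is correct and follows essentially the same route as the paper's own proof: the same matrix ODE for $\Ybf(t)=\Xbf(t)-\Xbarbf(t)$, the same column-wise Courant--Fischer argument for $\|\A\Ybf\|_F\le\sigma_2\|\Ybf\|_F$, the same delayed Gr\"onwall--Bellman step, and, crucially, the same identification of the consensus part of $\vbf_i(t)$ as a feasible direction (the paper takes $\theta=\beta^{-1}$ in \eqref{def:feasible_set} so that $\xbf_i(t)+\theta\,\mathbf{r}_i(t)=\sum_j a_{ij}\xbf_j(t-\tau)\in\Xcal$) combined with Lemma~\ref{lem:Projection}(b) to get $\|\zetabf_i(\vbf_i(t))\|_2\le C_i\alpha(t)$. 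Parts (2) and (3) are then reduced to the scalar integral manipulations exactly as you describe.
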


\begin{proof}[Proof sketch]
As mentioned, the key step in the proof of Lemma \ref{lem_const_case:pertbAverdelay} is to show \eqref{lem_const_case:AverIneq}. The analysis of \eqref{lem_const_case:AsympConv} and \eqref{lem_const_case:Upperbound} are consequences of \eqref{lem_const_case:AverIneq}. Consider the following notation:
\begin{align*}
\Gbf(t)&=(\I-\frac{1}{n}\mathbf{1}\mathbf{1}^T)\nabla F(\Xbf(t)),\quad\quad \Hbf(t) = \left(\I-\frac{1}{n}\1\1^T\right)\zetabf(\Vbf(t))\quad\quad \Ybf(t) = \Xbf(t) - \bar{\Xbf}(t).
\end{align*} 
We first consider 
\begin{align*}
\dot{\ybf}_i(t) &= \dot{\xbf}_i(t) - \dot{\bar{\xbf}}(t)\nonumber\\
&= -\beta \xbf_i(t) +\beta\sum_{j=1}^n a_{ij}\xbf_j(t-\tau) - \alpha(t)\nabla f_i(\xbf_i(t)) -\zetabf_i(t)\nonumber\\ 
&\quad + \beta \bar{\xbf}(t) - \beta\bar{\xbf}(t-\tau) + \frac{\alpha(t)}{n}\sum_{j=1}^n\nabla f_j(\xbf_j(t))+\bar{\zetabf}(t)\nonumber\\
&= -\ybf_i(t) + \beta\sum_{j=1}^n a_{ij}\ybf_j(t-\tau) - \alpha \gbf_i(t)  - \hbf_i(t),
\end{align*}
which implies 
\begin{align*}
\ybf_i(t) = e^{-t}\ybf_i(0) + \int_0^t e^{-(t-u)}\left( \beta\sum_{j=1}^n a_{ij}\ybf_j(u-\tau) - \alpha \gbf_i(u)  - \hbf_i(u)\right)dt.
\end{align*}
Thus we obtain
\begin{align}
\Ybf(t)&= e^{-\beta t}\Ybf(0)+ \beta \int_{0}^{t}e^{-\beta(t-u)}\A \Ybf(u-\tau)du - \int_{0}^{t}e^{-\beta(t-u)}\left(\alpha(u)\Gbf(u)+\Hbf(u)\right)du.\label{lem_const_case:Eq1a}
\end{align}
Recall that $\Ybf(t) = \Xbf(t) - \bar{\Xbf}(t) = (\I-\frac{1}{n}\1\1^T)\Xbf(t)$. In addition, note that $\1^T\Ybf(t) = \1^T(\I-\frac{1}{n}\1\1^T)\Xbf(t) = \mathbf{0}$, implying that each column of $\Ybf(t)\notin span\{\1\}$. Indeed, if there exists at least one column of $\Ybf(t)$, namely, $\mathbf{p}_{\ell}(t)$, such that $\mathbf{p}_{\ell}(t)\in span\{\1\}$ then $\1^T\mathbf{p}_{\ell}(t) \neq 0$ but $\1^T\Ybf(t) = \mathbf{0}$, a contradiction. The previous observation implies that 
\begin{align}
\|\A\Ybf(t)\|_F^2 = \sum_{i=1}^n\|\A\mathbf{p}_{i}(t)\|_2^2 \leq \sum_{i=1}^n \sigma_2\|\mathbf{p}_{i}(t)\|_2^2  = \sigma_2 \|\Ybf\|_{F}^2,\label{lem_const_case:Eq1b}
\end{align}  
where $\mathbf{p}_i(t)$ are columns of $\Ybf(t)$. Taking the Frobenius norm on both sides of \eqref{lem_const_case:Eq1a}, and using \eqref{dplem:gbound} and \eqref{lem_const_case:Eq1b} we have
\begin{align}
\|\Ybf(t)\|_{F} &\leq e^{-\beta t}\|\Ybf(0)\|_F + \beta\sigma_2 \int_{0}^{t}e^{-\beta(t-u)}\|\Ybf(u-\tau)\|_F du+C\int_{0}^{t}e^{-\beta(t-u)}\alpha(u)du\nonumber\\
&\quad + \int_{0}^{t}e^{-\beta(t-u)}\|\zetabf(u)\|_Fdu.\label{lem_const_case:Eq1}
\end{align}
We now use Lemma \ref{lem:Projection} to construct an upper bound on the last term on the right hand side of \eqref{lem_const_case:Eq1}. First, since $\A$ is doubly stochastic and $\xbf_j(t-\tau)\in \mathcal{X}$ $\forall j$ we have $\sum_{j\in\mathcal{N}_i}a_{ij}\xbf_j(t-\tau)\in\Xcal$. Thus, by \eqref{def:feasible_set} with $\theta = \beta^{-1}$ we have
\begin{align*}
r_i(t) = -\beta \xbf_i(t) + \beta \sum_{j\in\mathcal{N}_i}a_{ij}\xbf_j(t-\tau)\in\mathcal{D}_{\Xcal}(\xbf_i(t)).
\end{align*}
Hence, by  Proposition \ref{prop:TangentCone} we have $r_i(t)\in \mathcal{T}_{\mathcal{X}}(\xbf_i(t))$.  By Lemma \ref{lem:Projection}(b), we have 
\begin{align}
\|\mathcal{P}_{\mathcal{T}_{\mathcal{X}}(\xbf_i(t))}[\vbf_i(t)]-\mathbf{r}_i(t)\|_2^2\leq \|\vbf_i(t) - \mathbf{r}_i(t)\|_2^2 - \|\mathcal{P}_{\mathcal{T}_{\mathcal{X}}(\xbf_i(t))}[\vbf_i(t)] - \vbf_i(t) |_2^2,\nonumber  
\end{align}
which since ${\zetabf_i(t) = \vbf_i(t) - \mathcal{P}_{\mathcal{T}_{\mathcal{X}}(\xbf_i(t))}[\vbf_i(t)]}$ implies
\begin{equation}
\|\zetabf_i(t)\|_2 \leq \|\vbf_i(t) - \mathbf{r}_i(t)\|_2 = \|\alpha(t)\nabla f_i(\xbf_i(t))\|_2 \leq C_i\alpha(t).\label{lem_const_case:eibound}
\end{equation}
Thus we obtain $\|\zetabf(t) - \bar{\zetabf}(t)\|_F = \|\left(\I-\frac{1}{n}\1\1^T\right)\zetabf(t)\|_F \leq \|\zetabf(t)\|_F \leq C\alpha(t)$. Substituting the previous relation into \eqref{lem_const_case:Eq1} and using \eqref{dplem:Eq3d} we obtain \eqref{lem_const_case:AverIneq}.
\end{proof}

In the lemma below, with some abuse of notation we denote by $\Xbf_i(t)$ the matrix whose all the rows are $\xbf_i^T(t)$, i.e., 
\begin{align*}
\Xbf_i(t) = \left( \begin{array}{c}
\xbf_i^T(t)\\
\ldots\\
\xbf_i^T(t)
\end{array}\right) 
\end{align*}

\begin{lemma}\label{lem_const_case:ConvRate}
Suppose Assumptions \ref{assump:convexity}--\ref{assump:doublystochastic} hold. Let the trajectories of $\xbf_i(t)$ be updated by Algorithm \ref{alg:delay}. Let $\beta \in (0,\frac{\ln(1/\sigma_2)}{\tau})$ and $\gamma =\sigma_2 e^{\beta\tau} \in (0,1)$. Let $\{\alpha(t)\}$ be a given positive scalar sequence such that $\alpha(t) = 1/\sqrt{t}$ for $t\geq 1$ and $\alpha(t) = 1$ for $t\leq 1$. Then for each $i=1,\ldots,n$ we have\vspace{-0.3cm}

\begin{align}
&F\left(\frac{\int_{0}^t\alpha(u)\Xbf_i(u)du}{\int_{0}^t\alpha(u)du}\right) -f^*\leq \frac{2\Gamma_0(t)+nV(\xbar(0))}{2(\sqrt{t}-1)},
\label{lem_const_case:funcrate}
\end{align}
where,
\begin{align}
\Gamma_0(t) \triangleq  &  \frac{24C\left(\|\Xbf(0)\|_F+2C\right)e^{\beta\tau/2}}{\beta^3(1-\gamma)^2} +  \frac{48C^2(1+\tau)}{\beta^2\gamma(1-\gamma)}+C^2\ln(t)+ \frac{48C^2\ln(\gamma t-4\tau)}{\beta^2\gamma(1-\gamma)}\cdot
\end{align}
\end{lemma}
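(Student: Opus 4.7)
The plan is to mirror the proof of Theorem~\ref{DCRthm:ConvRate} with the scalar quantities $\bar{x}(t)$, $x_i(t)$, $v_i(t)$, $f_i'(x_i)$, $\zeta_i$ replaced by the corresponding vectors $\bar{\xbf}(t)$, $\xbf_i(t)$, $\vbf_i(t)$, $\nabla f_i(\xbf_i)$, $\zetabf_i$, and the scalar absolute value replaced by $\|\cdot\|_2$ on $\Rset^d$ (respectively $\|\cdot\|_F$ for the stacked matrix $\Xbf$). The candidate Razumikhin--Krasovskii Lyapunov functional will be
\begin{align*}
V(\bar{\xbf}(t)) = \tfrac{1}{2}\|\bar{\xbf}(t) - \xbf^*\|_2^2 + \tfrac{\beta}{2}\int_{t-\tau}^t \|\bar{\xbf}(s) - \xbf^*\|_2^2 ds,
\end{align*}
and its derivative along the trajectories of the averaged update $\dot{\bar{\xbf}}(t) = -\beta\bar{\xbf}(t) + \beta\bar{\xbf}(t-\tau) - \tfrac{\alpha(t)}{n}\sum_i \nabla f_i(\xbf_i(t)) - \bar{\zetabf}(t)$ will be computed by the same cancellation trick: the cross-term $(\bar{\xbf}(t) - \xbf^*)^T(\bar{\xbf}(t-\tau) - \bar{\xbf}(t))$, together with the boundary terms from differentiating the integral, combines into $-\tfrac{\beta}{2}\|\bar{\xbf}(t) - \bar{\xbf}(t-\tau)\|_2^2 \leq 0$. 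This is exactly the reason for the Razumikhin--Krasovskii term and it removes the delay from $\dot{V}$.

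The next step is to split the remaining terms into $W_1$, involving $\nabla f_i(\xbf_i(t))$, and $W_2$, involving $\bar{\zetabf}(t)$. For $W_1$, I add and subtract $\xbf_i(t)$ to obtain
\begin{align*}
W_1 = -\tfrac{\alpha(t)}{n}\sum_i (\bar{\xbf}(t) - \xbf_i(t))^T \nabla f_i(\xbf_i(t)) - \tfrac{\alpha(t)}{n}\sum_i (\xbf_i(t) - \xbf^*)^T \nabla f_i(\xbf_i(t)),
\end{align*}
bound the first piece by $\tfrac{C\alpha(t)}{n}\|\Xbf(t) - \bar{\Xbf}(t)\|_F$ via Cauchy--Schwarz and Proposition~\ref{prop:Lipschitz}, and bound the second piece using convexity of each $f_i$ as $-\tfrac{\alpha(t)}{n}(F(\Xbf(t)) - f^*)$; adding one more term $\pm F(\bar{\Xbf}(t))$ and using Lipschitz continuity once more produces $W_1 \leq \tfrac{2C\alpha(t)}{n}\|\Xbf(t) - \bar{\Xbf}(t)\|_F - \tfrac{\alpha(t)}{n}(F(\bar{\Xbf}(t)) - f^*)$.

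For $W_2$, the plan is to use the vector version of the feasible-direction argument based on Lemma~\ref{lem:Projection}. Define $\mathbf{r}_i(t) = \xbf^* - \xbf_i(t) - \beta \xbf_i(t) + \beta \sum_j a_{ij} \xbf_j(t-\tau)$; since $\xbf^*, \xbf_j(t-\tau) \in \Xcal$ and $\A$ is doubly stochastic, a convex combination argument places $\xbf_i(t) + \tfrac{1}{1+\beta}\mathbf{r}_i(t)$ in $\Xcal$, so $\mathbf{r}_i(t)$ lies in $\mathcal{D}_\Xcal(\xbf_i(t)) \subseteq \mathcal{T}_\Xcal(\xbf_i(t))$ by Proposition~\ref{prop:TangentCone}. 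Then Lemma~\ref{lem:Projection}(a) with $\xbf := \vbf_i(t)$ and $\ybf := \mathbf{r}_i(t)$ gives $(\vbf_i(t) - \mathbf{r}_i(t))^T \zetabf_i(t) \geq \|\zetabf_i(t)\|_2^2$, which is the vector analog of~\eqref{lem_analysis:errorbound2}. Rewriting $\bar{\xbf}(t) - \xbf^* = (\bar{\xbf}(t) - \xbf_i(t)) + (\alpha(t)\nabla f_i(\xbf_i(t))) - (\vbf_i(t) - \mathbf{r}_i(t))$ and summing over $i$, together with~\eqref{lem_const_case:eibound}, produces $W_2 \leq \tfrac{C\alpha(t)}{n}\|\Xbf(t) - \bar{\Xbf}(t)\|_F + \tfrac{C^2\alpha^2(t)}{n} - \tfrac{1}{n}\|\zetabf(t)\|_F^2$; the last term is non-positive and dropped.

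The final step is integration and averaging. Integrating $\dot{V} \leq W_1 + W_2$ from $0$ to $t$, invoking the Frobenius-norm estimate~\eqref{lem_const_case:Upperbound} from Lemma~\ref{lem_const_case:pertbAverdelay}, and absorbing the resulting constants into $\Gamma_0(t)$ (computed exactly as in~\eqref{DCRthm:Eq6}--\eqref{DCRthm:Eq7}) gives $\int_0^t \alpha(u)(F(\bar{\Xbf}(u)) - f^*)du \leq 2\Gamma_0(t) + nV(\bar{\xbf}(0))$. Dividing by $\int_0^t \alpha(u)du \geq 2(\sqrt{t} - 1)$, applying Jensen's inequality to the convex function $F$, and then using Lipschitz continuity to replace $\bar{\xbf}$ by an arbitrary $\xbf_i$ (as in~\eqref{DCRthm:Eq11}) yields~\eqref{lem_const_case:funcrate}.

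The main obstacle is the feasible-direction step for $W_2$: in one dimension the interval boundary allowed the case analysis $x_i = a$ or $x_i = b$, but for general convex $\Xcal \subseteq \Rset^d$ I must argue via Lemma~\ref{lem:Projection} and Proposition~\ref{prop:TangentCone} to produce an admissible direction $\mathbf{r}_i(t)$ from the doubly stochastic convex combination $\sum_j a_{ij} \xbf_j(t-\tau) \in \Xcal$. Everything else is a bookkeeping translation of the scalar proof into Frobenius-norm notation.
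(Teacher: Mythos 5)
Your proposal follows essentially the same route as the paper's proof: the same Razumikhin--Krasovskii functional, the same $W_1$/$W_2$ split, the same feasible direction $\mathbf{r}_i(t)=\xbf^*-(1+\beta)\xbf_i(t)+\beta\sum_{j} a_{ij}\xbf_j(t-\tau)$ handled via Proposition~\ref{prop:TangentCone} and Lemma~\ref{lem:Projection}, and the same integration/Jensen finish. The only differences are cosmetic: your choice $\theta=\tfrac{1}{1+\beta}$ in the feasible-direction check is if anything cleaner than the paper's $\theta=\tfrac12$ (which tacitly needs $\beta\le 1$ for the convex-combination coefficients to be nonnegative), and the sign in your identity for $\bar{\xbf}(t)-\xbf^*$ should be $+\left(\vbf_i(t)-\mathbf{r}_i(t)\right)$ rather than $-$, a slip that does not affect the stated bound on $W_2$.
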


\begin{proof}[Proof Sketch]
Let $\xbf^*$ be a solution of problem \eqref{prob:obj}. Consider the candidate Razumikhin-Krasovskii Lyapunov function given in \eqref{DCRthm:Lyapunov}, where its derivative is given as
\begin{align}
\dot{V}(\bar{\xbf}(t))&\leq \underbrace{-\frac{\alpha(t)}{n}\sum_{i=1}^n (\bar{\xbf}(t)-\xbf^*)^T \nabla f_i(\xbf_i(t))}_{W_1} \underbrace{-\frac{1}{n}\sum_{i=1}^n (\bar{\xbf}(t)-\xbf^*)^T \zetabf_i(\xbf_i(t))}_{W_2}\nonumber\\ 
&\leq W_1+W_2.\label{lem_const_case:Eq2}
\end{align}
The term $W_1$ can be upper bounded by using \eqref{DCRthm:Eq1c}. Here we focus on delivering the upper bound of $W_2$. Recall that $\zetabf_i(t) = \vbf_i(t) - \mathcal{P}_{\Tcal_{\Xcal(\xbf_i(t))}}[\vbf_i(t)]$. Consider
\begin{align}
&W_2= -(\bar{\xbf}(t)-\xbf^*)\bar{\zetabf}(t)\nonumber\\
&= -\frac{1}{n} \sum_{i=1}^n\left(\bar{\xbf}(t)-(1+\beta)\xbf_i(t) + \beta\sum_{j=1}^n a_{ij}\xbf_j(t-\tau) - \vbf_i(t)\right)^T\zetabf_i(t)\nonumber\\ 
&\quad - \frac{1}{n} \sum_{i=1}^n\left(\vbf_i(t) + (1+\beta)\xbf_i(t) - \beta\sum_{j=1}^n a_{ij}\xbf_j(t-\tau)-\xbf^*\right)^T\zetabf_i(t), \label{lem_const_case:Eq2a}
\end{align}
where by \eqref{analysis:viUpdate} the first sum is equivalent to 
\begin{align*}
&-\frac{1}{n} \sum_{i=1}^n\left(\bar{\xbf}(t)-(1+\beta)\xbf_i(t) + \beta\sum_{j=1}^n a_{ij}\xbf_j(t-\tau) - \vbf_i(t)\right)^T\zetabf_i(t)\nonumber\\
& = -\frac{1}{n} \sum_{i=1}^n\left(\bar{\xbf}(t)-x_i(t)+\alpha(t)\nabla f_i(\xbf_i(t))\right)^T\zetabf_i(t)\nonumber\\
&\leq \frac{1}{n}\sum_{i=1}^n\|\bar{\xbf}(t) - \xbf_i(t)\|_2 \|\zetabf_i(t)\|_2 + \frac{1}{n} \sum_{i=1}^n\alpha(t)\|\nabla f_i(\xbf_i(t))\|_2\|\zetabf_i(t)\|_2\nonumber\\
&\overset{\eqref{lem_const_case:eibound}}{\leq} \frac{C \alpha(t)}{n}\|\Xbf(t) - \bar{\Xbf}(t)\mathbf{1}\|_F + \frac{C^2\alpha^2(t)}{n}.
\end{align*}
On the other hand, let $\mathbf{r}_i(t)$ be defined as
\begin{align*}
\mathbf{r}_i(t) = \xbf^* - (1+\beta)\xbf_i(t) + \beta\sum_{j=1}^n a_{ij}\xbf_j(t-\tau).
\end{align*}
Consider
\begin{align*}
\xbf_i(t) + \frac{1}{2}\mathbf{r}_i(t) = \frac{1-\beta}{2}\xbf_i(t) + \frac{1}{2}\xbf^* + \frac{\beta}{2}\sum_{j=1}^n a_{ij}\xbf_j(t-\tau)\in\Xcal.
\end{align*}
which by \eqref{def:feasible_set} with $\theta = 1/2$ implies $\mathbf{r}_i(t)\in \mathcal{D}_{\Xcal}(\xbf_i(t))$. In addition, by Proposition \ref{prop:TangentCone} we have $\mathbf{r}_i(t)\in\Tcal_{\Xcal(\xbf_i(t))}$. Thus, by applying (1a) in Lemma \ref{lem:Projection} to the second term in \eqref{lem_const_case:Eq1a} we obtain
{\small
\begin{align*}
 &- \frac{1}{n}\sum_{i=1}^n\left(\vbf_i(t)-\mathbf{r}_i(t)\right)^T\zetabf_i(t) 
\leq - \frac{1}{n}\sum_{i=1}^n \left\|\vbf_i(t) -  \mathcal{P}_{\Tcal_{\Xcal(\xbf_i(t))}}[\vbf_i(t)]\right\|_2^2 = -\frac{1}{n}\|\zetabf(t)\|_F^2.
\end{align*}}
Applying the preceding two relations into \eqref{lem_const_case:Eq1a} we obtain
\begin{align}
W_2 \leq \frac{C \alpha(t)}{n}\|\Xbf(t) - \bar{\Xbf}(t)\mathbf{1}\|_{F} + \frac{C^2\alpha^2(t)}{n} - \frac{1}{n}\|\zetabf(t)\|_F^2\leq \frac{C \alpha(t)}{n}\|\Xbf(t) - \bar{\Xbf}(t)\mathbf{1}\|_{2} + \frac{C^2\alpha^2(t)}{n}. \label{lem_const_case:Eq2b}
\end{align}
Thus we obtain the same result as in \eqref{DCRthm:Eq4}, i.e., 
\begin{align*}
\dot{V}(\bar{x}(t)) \leq \frac{3\alpha(t)C}{n}\|\Xbf(t)-\bar{\Xbf}(t)\|_F + \frac{C^2\alpha^2(t)}{n}- \frac{\alpha(t)}{n}(F(\bar{\Xbf}(t)\mathbf{1})-f^*).
\end{align*}
The rest of this proof is the same as the one in Section \ref{subsec:DelayProofs_rate}.
\end{proof}

\end{document}